\pdfoutput=1
\documentclass[11pt]{article}
\usepackage{amsmath}
\usepackage{amssymb}
\usepackage{amsthm}
\usepackage{pdfpages} 
\usepackage[pdftex,colorlinks]{hyperref}
\newcommand {\R}{\mathbb R}

\newcommand{\e}{\mathbb E}

\theoremstyle{definition}\newtheorem{thm}{Theorem}
\theoremstyle{definition}\newtheorem{lem}[thm]{Lemma}
\theoremstyle{definition}\newtheorem{cor}[thm]{Corollary}
\theoremstyle{definition}
\theoremstyle{definition}\newtheorem{defi}[thm]{Definition}
\theoremstyle{definition}\newtheorem{rem}[thm]{Remark}
\theoremstyle{definition}\newtheorem{prop}[thm]{Proposition}
\theoremstyle{definition}
\theoremstyle{definition}
\theoremstyle{definition}\newtheorem{assum}{Assumption}
\theoremstyle{definition}
\theoremstyle{definition}\newtheorem{ex}{Example}
\numberwithin{equation}{section}

\newcommand{\PP}{\mathbb P}

\newcommand{\B}{{\cal B}}

\newcommand{\Z}{\mathbb Z}

\newcommand{\ind}{\mathbf 1}

\newcommand{\bbN}{\Bbb{N}}

\newcommand{\re}{\texttt{restart}~}
\newcommand{\ch}{\texttt{checkpointing}~}
\newcommand{\res}{\texttt{restart}}
\newcommand{\che}{\texttt{checkpointing}}

\newcommand{\E}{\e}


\newcommand{\iid}{\hbox{i.i.d.}}
\newcommand{\bL}{\textbf{L}}

\begin{document}

\author{Antonio Sodre\thanks{asodre@math.utexas.edu} \\{\small University of Texas at Austin}}
\title{Asymptotic efficiency of restart and checkpointing}
\date{}
\maketitle
\begin{abstract}
Many tasks are subject to failure before completion. Two of the most common failure recovery strategies are \re and \break \che. Under \res, once a failure occurs, it is restarted from the beginning. Under \che, the task is resumed from the preceding checkpoint after the failure. We study asymptotic efficiency of \re for an infinite sequence of tasks, whose sizes form a stationary sequence. We define asymptotic efficiency as the limit of the ratio of the total time to completion in the absence of failures over the total time to completion when failures take place. Whether the asymptotic efficiency is positive or not depends on the comparison of the tail of the distributions of the task size and the random variables governing failures. Our framework allows for variations in the failure rates and dependencies between task sizes. We also study a similar notion of asymptotic efficiency for \ch when the task is infinite $\hbox{a.s.}$ and the inter-checkpoint times are \iid. Moreover, in \che, when the failures are exponentially distributed, we prove the existence of an infinite sequence of universal checkpoints, which are always used whenever the system starts from any checkpoint that precedes them. 
 \end{abstract}
{\bf Key words:} restart, checkpointing, failure recovery, dynamical systems, point process, point-shift.\\

\noindent{\bf MSC 2010 subject classification:} Primary: 37A05, 60G55. 

\section*{Introduction}

In many situations, such as the execution of a computer program, the copy of a file from a remote location using a protocol such as FTP or HTTP, channel reservation in cognitive radio networks and others, tasks are subject to failures. \texttt{Restart} and \ch are two of the most common ways to take into account failures in these context (see, among others, \cite{krishnamurthy2001web},\cite{nair2010file}, and \cite{castellanos2012channel}).

In \res, whenever a failure occurs, as the name suggests, the task is restarted. Accordingly, the \textit{actual} time to completion is possibly larger than the \textit{ideal} time. The latter is defined as the time for completion without failures. In \che, the task is partitioned: when a failure occurs, it is resumed from the last element of the partition before the failure. 

Here is a basic description of \res. Let $D$ be the ideal task time. If no failure occurs, the actual time to complete the task is just $D$. If a failure occurs at $L_0<D$, the task is restarted. Suppose there are $\nu>0$ failures before the task is completed. Then the actual time is given by $T^{R}=\sum_{i=0}^{\nu} L_i+D$. Failures are modeled by a sequence of \iid~random variables $\{L_n\}_{n\geq 0}$, named \emph{failure} times. The one-task \re model is studied in \cite{restart1}, \cite{restart2}, and \cite{asmussen2014failure} for a random variable $D$ with unbounded support (see Figure 1). Section \ref{localtime} introduces the formalism for the one-task \re model. 

In the one-task case, the actual time, $T^R$, is heavy-tailed, even when the ideal time and the failure time have light tails. Moreover, the actual task time may have infinite expectations, even if both $D$ and $L_0$ do not, depending on the comparison of the tail distributions of $D$ and $L_0$ \cite{asmussen2014failure}. 

We extend the literature on \re by considering an infinite sequence of tasks, $\{D_n\}_{n\geq 0}$, introducing the concept of asymptotic efficiency. Let $T^R_n$ be the actual time of task $n$. We define asymptotic efficiency as
\begin{align}
\label{efficiency1}
e=\lim_{N\to \infty}\frac{\sum_{n=0}^{N-1} D_n}{\sum_{n=0}^{N-1} T^R_n}, 
\end{align}
whenever the limit exists \hbox{a.s.}. The system is inefficient when $e=0$.

In this sequential \re model, the ideal times is given by the distance between points of a simple stationary point process in $\mathbb{R}$. Such a point process can be seen as a random discrete sequence of distinct elements on $\R$, $\{X_n\}_{n\in \Z}$, such that $X_n<X_{n+1}$ for all $n$. The sequence of tasks sizes is given by $D_0=X_1-X_0$, $D_1=X_2-X_1$ and so on. We mark the point $X_n$ with an \iid~sequence, $\{L_{n,i}\}_{i\geq 1}$, capturing the failure times of the $n^{th}$-task. We present the point process setting for modeling task sizes and failures in Section \ref{generalrestartandcheckpoint}. 

We prove that asymptotic efficiency exists when the point process is stationary, the failure sequence $\{L_{n,i}\}_{i\geq 1}$ is independent of $D_n$, and under some integrability conditions. We do not require the sequence $\{D_n\}_{n\geq 0}$ to be \iid. In fact, our set-up allows for variations in the failure rates and dependencies between task sizes.

Moreover, we give conditions under which the asymptotic efficiency is positive or zero. Two special cases are considered: Markov renewal process, and the case in which there is a chance that tasks need to be repeated after completion (Section \ref{restartdiscussion}). 

The \ch model can be described as follows \cite{asmussen2014failure}. Again, we have a random a task $D$ of random length with infinite support, but finite a.s.. We partition $[0,D]$ into $k$ intervals and label the endpoint of the $l^{th}$ interval by $X^l$. We call $\{X^l\}_{l=1}^{k-1}$ the set of checkpoints. Once a checkpoint is reached and a failure occurs, the task is resumed from the latest checkpoint before the failure. More precisely, if a failure occurs before the first checkpoint, i.e., $L_1<X^1$, the task is resumed from the beginning. If $L_1>D$, i.e., there are no failures, the actual time to completion is simply $D$. Otherwise, if $X^1<L_1<D$, we check the partition in which $L_1$ falls. If $X^l\leq L^1<X^{l+1}$, the task is resumed from $X^l$ and the time spent so far is $L_1$. In that case, we start the clock again, representing it by the random variable $L_2$. If $L_2<X^{l+1}-X^l$, the task does not leave the checkpoint $X^l$. Otherwise, we verify which checkpoint was reached or whether the task was completed. We repeat this procedure until the task is completed. Assume that there are $\tau>0$ failures until completion. Then, the actual time is given by $T^{C}=\sum_{i=1}^\tau L_i+(D-X_\alpha)$, where $\alpha\in \{1,\ldots,k\}$ is the last checkpoint visited (see Figure 2).

Regarding the sequential \ch considered here, we define and study a notion of asymptotic efficiency, in a similar way to (\ref{efficiency1}). We consider a unique task, which is $\hbox{a.s.}$ infinite, and the distances between checkpoints are given by the inter-arrivals of a point process. We give the precise definition of asymptotic efficiency for \ch in Section \ref{generalrestartandcheckpoint}. We give a general condition for the existence of the asymptotic efficiency when the point process is a marked renewal process. 
\begin{figure}
\centering
\includegraphics[width=0.5\textwidth]{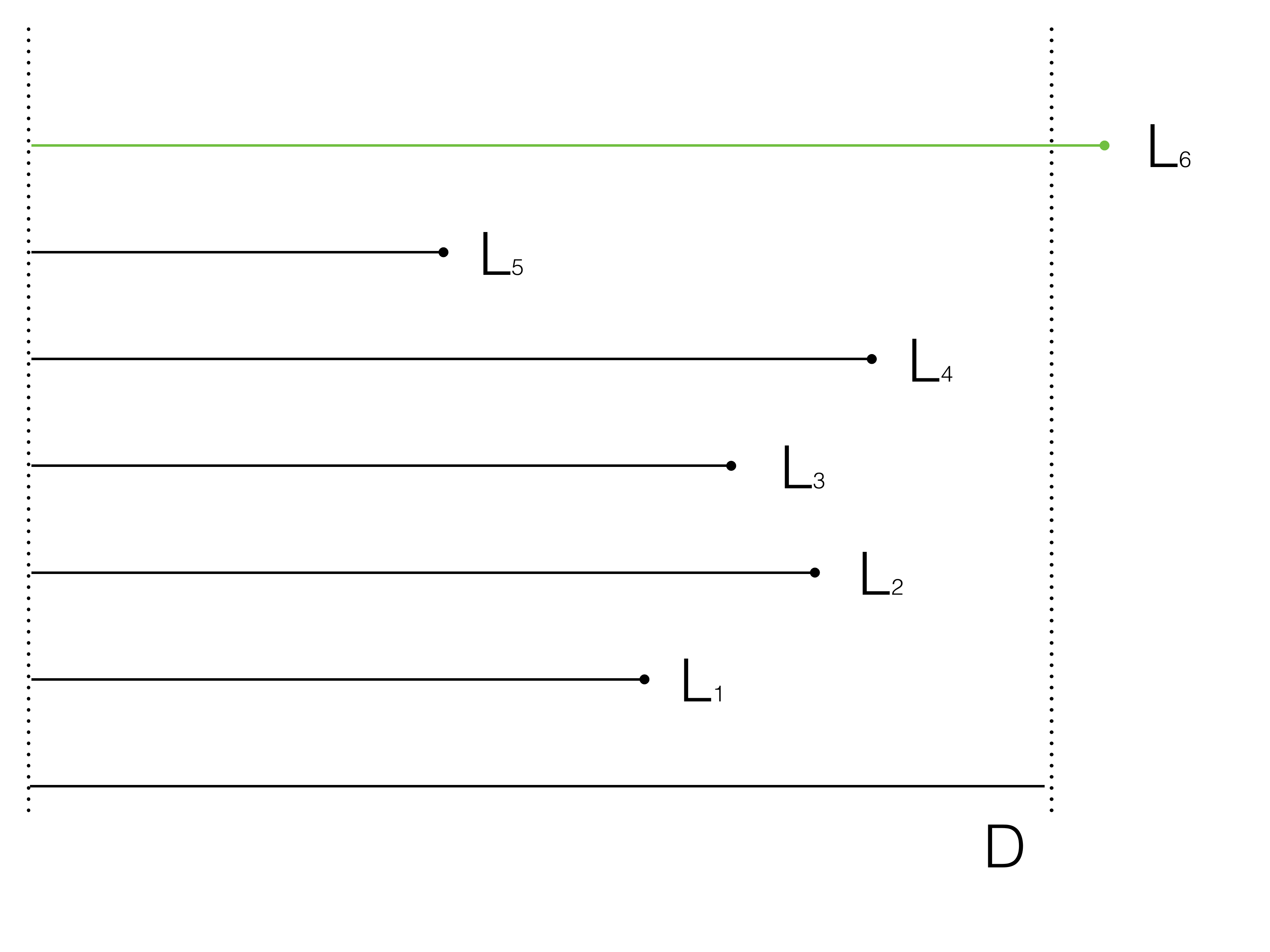}
\caption{An instance of \res. The task size is $D$. Five attempts take place before the task is completed, i.e., $\nu=5$. The time spent on each attempt is given by $L_1,\ldots,L_5$. In the sixth attempt the task is completed. The \emph{actual} time spent on completing the task is then $T^R=\sum_{i=1}^5L_i+D$.}
\end{figure}
\begin{figure}
\centering
\includegraphics[width=0.5\textwidth]{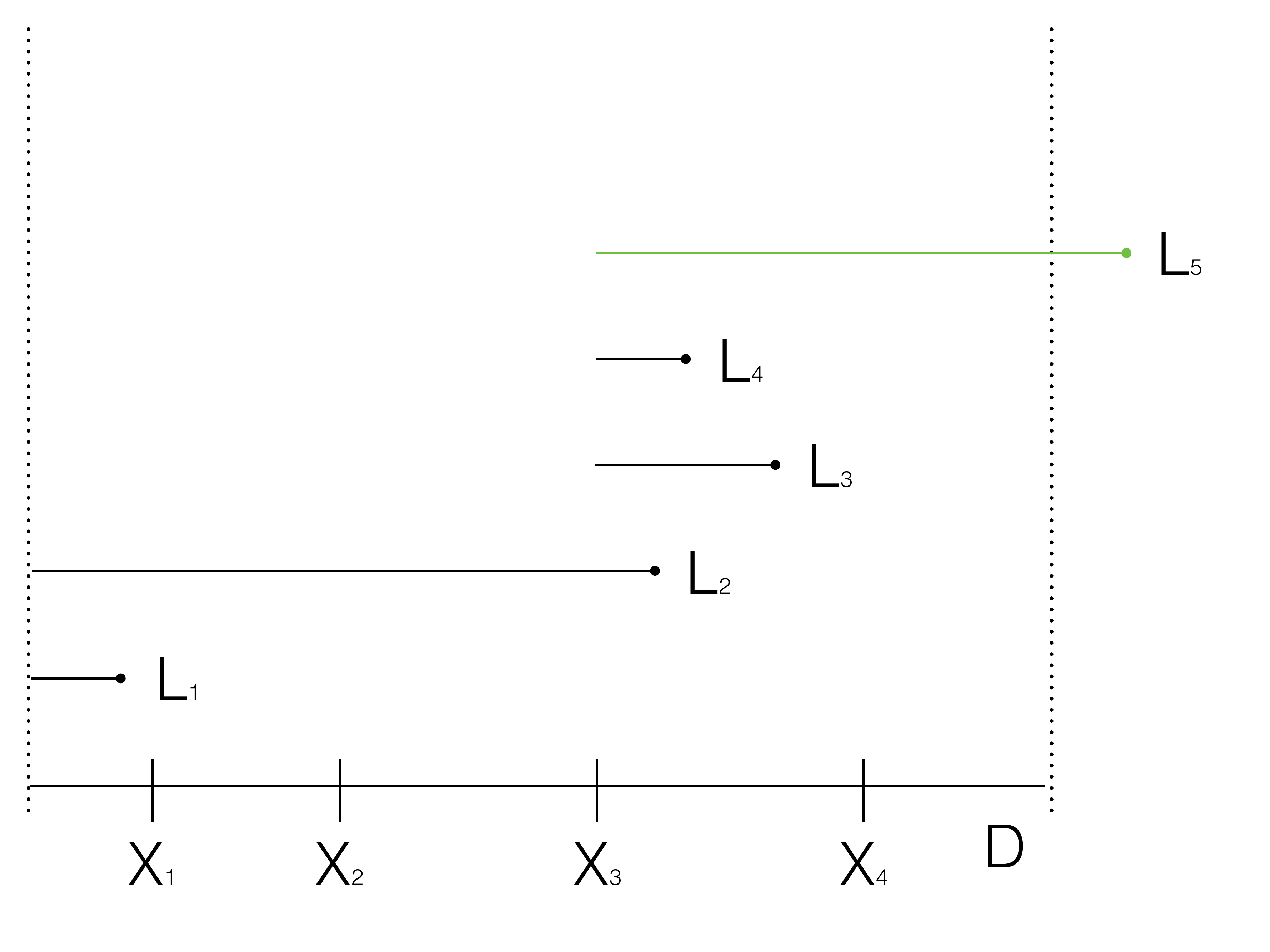}
\caption{An instance of \che. There is one failure before the first checkpoint, the second failure takes place after the third checkpoint and two more failures happen before the last checkpoint is surpassed. The \emph{actual} time till completion is given by $T^{C}=\sum_{i=1}^4L_i+(D-X_3)$.}
\end{figure}

Moreover, in the renewal process model with exponentially distributed failure times, we show the existence of an infinite subsequence of \emph{universal checkpoints}. If we start the system at any checkpoint preceding a universal checkpoint, the system will activate the latter a.s..

Section \ref{localtime} reviews the \emph{actual} time for one-task \re and \break \che, and gives the conditions under which the \emph{actual} time has finite moments. Section \ref{generalrestartandcheckpoint} presents a unified framework to study the asymptotic efficiency for both sequential \re and \che. Section \ref{restartmainresults} presents our main results for sequential \res. Section \ref{checkpointsection} does the same for sequential \che. Section \ref{restartdiscussion} discusses some extensions. The appendix contains a technical proof.


\section{One task \re and \ch}
\label{localtime}

In this section we recall known results on how to compute the actual time for \re and \ch with a finite a.s. task. This type of model was extensively studied in  \cite{restart1}, \cite{restart2}, and \cite{asmussen2014failure}. Here, our main result is whether the actual \re and \ch times have an infinite expectation depends on the tail comparison of $D$ and $L_0$, the former being the variable capturing failures. We build on this result in our study of the asymptotic efficiency in the next section. 

Let $D$ be a random variable in $\R^+$ which represents the ideal task time in both the \re model or the time up to the first checkpoint under the \ch model. Consider a sequence of \hbox{i.i.d.} random variables $\{L_n\}_{n\geq 0}$ defined on the same probability space $(\Omega,\mathcal{F},\PP)$ as $D$. 
Define 
\begin{equation*}
\tau=\inf\{k\geq 0: L_k>D\}.
\end{equation*}

The actual time taken to complete the task $D$ under \re is given by
\begin{equation*}
\label{time1}
T^R=\sum_{i=0}^{\tau-1} L_i+D,
\end{equation*}
and the actual time the system takes to pass the first checkpoint is 
\begin{equation*}
\label{time2}
T^C=\sum_{i=0}^{\tau} L_i.
\end{equation*}

\begin{assum}
\label{integrabilityassumption} 
$D$ and $L_0$ are integrable and independent random variables with \emph{right-unbounded support}, i.e, $\PP[D>x],\PP[L_0>x]>0$ for all $x\in [0,\infty)$. 
\end{assum}

\begin{defi}
\label{lighterthan}
Let $V$ and $W$ be random variables with right-unbounded support, defined on the same probability space $(\Omega,\mathcal{F},\PP)$. We say then that $V$ has a $\PP-$tail heavier than $W$ if there exists $z_0$ such that $\PP[V>z]\geq \PP[W>z]$ for all $z\geq z_0$. In the same vein, we say that $V$ has a strict $\PP-$tail heavier than $W$ if there exists $z_0$ and $\epsilon>0$ such that $\PP[V>z]\geq \PP[W>z]^{\epsilon}$ for all $z\geq z_0$.
\end{defi}

\begin{thm}
\label{assymptotics1} 
 Under Assumption \ref{integrabilityassumption}, 
\begin{align*}
\E[T^R]&=\E[D]+\int_0^{\infty}\frac{\E[L_0\textbf{1}\{L_0\le z\}]}{\PP[L_0>z]}f_D(dz)
\end{align*}
and
\begin{align*}
\E[T^C]&=\E[L_0]\int_0^{\infty} \frac{1}{\PP[L_0>z]}f_D(dz),
\end{align*}
where $f_D$ is the distribution of $D$. 
Moreover, $\E[T^R],\E[T^C]=\infty$ if $D$ has a $\PP-$tail heavier than $L_0$ and $\E[T^R],\E[T^C]<\infty$ if $L_0$ has a strict $\PP-$tail heavier than $D$. 
\end{thm}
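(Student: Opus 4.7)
The plan is to condition on $D=z$ and exploit the fact that, by independence of $D$ from the i.i.d.\ sequence $\{L_n\}$, the stopping time $\tau$ is geometric with success probability $p(z):=\PP[L_0>z]$, conditionally on $\{D=z\}$; in particular $\E[\tau\mid D=z]=(1-p(z))/p(z)$. Moreover, on $\{D=z,\tau=k\}$ the variables $L_0,\ldots,L_{k-1}$ are i.i.d.\ with the law of $L_0$ conditioned on $\{L_0\le z\}$, while $L_\tau$ has the law of $L_0$ conditioned on $\{L_0>z\}$.

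Applying Wald's identity conditionally then yields
\begin{align*}
\E\!\left[\sum_{i=0}^{\tau-1}L_i\,\Big|\,D=z\right]
=\E[\tau\mid D=z]\cdot\E[L_0\mid L_0\le z]
=\frac{\E[L_0\textbf{1}\{L_0\le z\}]}{\PP[L_0>z]},
\end{align*}
while $\E[L_\tau\mid D=z]=\E[L_0\textbf{1}\{L_0>z\}]/\PP[L_0>z]$. Summing the two gives $\E[T^C\mid D=z]=\E[L_0]/\PP[L_0>z]$, and adding $z$ to the first gives $\E[T^R\mid D=z]=z+\E[L_0\textbf{1}\{L_0\le z\}]/\PP[L_0>z]$. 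Integrating each against the law $f_D$ of $D$ produces the two displayed identities.

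For the dichotomy on finiteness, the task reduces to understanding $I_\al:=\int_0^\infty \PP[D>z]^{-\al}f_D(\myd z)$. The substitution $u=F_D(z)$ turns this into $\int_0^1(1-u)^{-\al}\myd u$, which is $+\infty$ for $\al=1$ and finite for $\al\in(0,1)$ (the atomic case reduces to the continuous one by a standard randomization of $F_D$). If $D$ has a $\PP$-tail heavier than $L_0$, then $\PP[L_0>z]^{-1}\ge\PP[D>z]^{-1}$ for $z\ge z_0$; since $L_0\ge 0$ with $\E[L_0]>0$, one has $\E[L_0\textbf{1}\{L_0\le z\}]\ge c>0$ for $z$ large, so the integrands for $\E[T^R]$ and $\E[T^C]$ dominate a positive multiple of $\PP[D>z]^{-1}$ on a tail, forcing both expectations to equal $+\infty$. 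If $L_0$ has a strict $\PP$-tail heavier than $D$ with exponent $\epsilon\in(0,1)$, then $\PP[L_0>z]^{-1}\le\PP[D>z]^{-\epsilon}$ for $z\ge z_0$; combined with $\E[L_0\textbf{1}\{L_0\le z\}]\le\E[L_0]<\infty$, both integrands are dominated by constants times $\PP[D>z]^{-\epsilon}$, so $I_\epsilon<\infty$ yields $\E[T^R],\E[T^C]<\infty$.

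The main obstacle is essentially bookkeeping rather than analysis: one has to pin down carefully the conditional law of $L_0,\ldots,L_{\tau-1}$ versus $L_\tau$ to justify the Wald step, and one must use the strict heavier-tail hypothesis in its meaningful regime $\epsilon\in(0,1)$ (for $\epsilon\ge 1$ the bound $\PP[D>z]^\epsilon$ becomes too small to be informative, so the substantive content of ``strict'' sits in $(0,1)$). The $u=F_D(z)$ substitution and the dominated-integrand comparisons are then routine.
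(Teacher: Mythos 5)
Your proof is correct, and the derivation of the two expectation identities takes a genuinely different (and equally valid) route from the paper's. The paper's appendix writes the conditional actual time as a recursive fixed-point relation, $T^R(z)=z\textbf{1}\{L_0>z\}+(\hat T^R(z)+L_0)\textbf{1}\{L_0\le z\}$ with $\hat T^R(z)$ an independent copy, takes expectations, and solves algebraically for $m_R(z)$ (and similarly $m_C(z)$). You instead exploit that, conditionally on $\{D=z\}$, $\tau$ is geometric with success probability $p(z)=\PP[L_0>z]$, $L_0,\dots,L_{\tau-1}$ are i.i.d.\ with the law of $L_0\mid\{L_0\le z\}$, and $L_\tau$ has the law of $L_0\mid\{L_0>z\}$, so a conditional Wald computation gives $m_R(z)$, $m_C(z)$ directly. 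Both approaches yield the same formulas; the recursion is slightly shorter, while your version makes the stopped-sum structure transparent and in particular explains why adding $\E[L_\tau\mid D=z]$ to the Wald term telescopes to $\E[L_0]/p(z)$ for $T^C$. For the finiteness dichotomy you follow the paper's strategy exactly: bound the integrand by a constant times $\PP[D>z]^{-\alpha}$ on a tail and substitute $u=F_D(z)$ to reduce to $\int_{u_0}^1(1-u)^{-\alpha}\,\myd u$. You also correctly flag two things the paper leaves implicit: the ``strict'' heavier-tail hypothesis is only meaningful for $\epsilon\in(0,1)$ (with $\epsilon\ge 1$ the condition is satisfied by $D\stackrel{d}{=}L_0$, for which $\E[T^R]=\infty$, so Definition~\ref{lighterthan} should be read with $\epsilon<1$, exactly as the paper's own appendix proof assumes), and the change of variables requires a word when $F_D$ has atoms. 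Neither is a gap in your argument.
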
 
The proof can be found in Appendix \ref{proofsappendix}. 

As an application of Theorem \ref{assymptotics1}, suppose $L_0\sim \exp(\lambda_l)$ and $D\sim \exp(\lambda_d)$. Then $\E[T^R],\E[T^C]=\infty$ if and only if $\lambda_l\geq \lambda_d$. Notice also that when $D\overset{(d)}{=}L_0$, $\E[T^R],\E[T^C]=\infty$. 

A random variable $Z$ is said to be heavy-tailed if for all $\gamma>0$,
\begin{align*}
\lim_{t\to \infty} e^{\gamma t}\PP[Z>t]=\infty,
\end{align*}
and light-tailed if there exists $\gamma>0$ such that the above limit is finite. By direct manipulations, one gets the following corollary of Theorem \ref{assymptotics1}. 

\begin{cor}
\label{renewaliidtails} 
Under Assumption \ref{integrabilityassumption}:
\begin{enumerate}
\item If $D$ is heavy-tailed and $L_0$ is light-tailed, $\E[T^C],\E[T^R]=\infty$;
\item If $L_0$ is heavy-tailed and $D$ is light-tailed, $\E[T^C],\E[T^R]<\infty$. 
\end{enumerate} 
\end{cor}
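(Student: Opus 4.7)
The plan is to deduce both parts directly from Theorem \ref{assymptotics1} by translating the heavy-tailed/light-tailed hypotheses into the tail-comparison language of Definition \ref{lighterthan}. The exponential decay bounds built into the two notions essentially give us the tail dominance for free.

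For part (1), I would start from the fact that $L_0$ being light-tailed means there exists $\gamma>0$ such that $\limsup_{t\to\infty}e^{\gamma t}\PP[L_0>t]<\infty$, so $\PP[L_0>t]\le K e^{-\gamma t}$ for all $t$ large enough and some constant $K$. Since $D$ is heavy-tailed, applying the definition with this same $\gamma$ yields $e^{\gamma t}\PP[D>t]\to\infty$, hence $\PP[D>t]\ge Ke^{-\gamma t}\ge\PP[L_0>t]$ for all $t\ge z_0$ with $z_0$ sufficiently large. This is exactly the statement that $D$ has a $\PP$-tail heavier than $L_0$, so Theorem \ref{assymptotics1} gives $\E[T^R]=\E[T^C]=\infty$.

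For part (2), I would argue similarly but now tuning an exponent $\epsilon$. Since $D$ is light-tailed, pick $\gamma_d>0$ with $\PP[D>t]\le Ke^{-\gamma_d t}$ for $t$ large; then for any fixed $\epsilon\in(0,1)$ we have $\PP[D>t]^\epsilon\le K^\epsilon e^{-\gamma_d\epsilon\, t}$. Since $L_0$ is heavy-tailed, the definition applied with $\gamma=\gamma_d\epsilon$ gives $e^{\gamma_d\epsilon\, t}\PP[L_0>t]\to\infty$, so eventually $\PP[L_0>t]\ge K^\epsilon e^{-\gamma_d\epsilon\, t}\ge\PP[D>t]^\epsilon$. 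This is the condition that $L_0$ has a strict $\PP$-tail heavier than $D$, and Theorem \ref{assymptotics1} then yields $\E[T^R],\E[T^C]<\infty$.

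There is really no substantive obstacle: both halves reduce to matching an exponential rate coming from the light-tailed variable against the (arbitrarily small) exponential rate available in the heavy-tailed definition, and the small technical point in part (2) is simply to absorb the constant $K^\epsilon$ by choosing $z_0$ large enough after fixing $\epsilon$. The argument would be written in at most a few lines.
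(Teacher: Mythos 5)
Your argument is correct and is exactly the intended "direct manipulations" the paper alludes to: translate the heavy/light tail definitions into the $\PP$-tail comparison of Definition \ref{lighterthan} and invoke Theorem \ref{assymptotics1}. One small remark worth making explicit: the paper's Definition \ref{lighterthan} states ``strict $\PP$-tail heavier'' with ``$\epsilon>0$,'' but the actual proof of Theorem \ref{assymptotics1} in the appendix uses $0<\epsilon<1$ (the integral $\int_{w_0}^1 (1-w)^{-\epsilon}\,dw$ diverges for $\epsilon\geq 1$); you correctly restrict to $\epsilon\in(0,1)$, which is what the theorem's proof genuinely requires.
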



\section{Sequential \re and \che}
\label{generalrestartandcheckpoint} 
The goal of this section is to define the asymptotic efficiency under \break \re (resp. \che) when there is a sequence of tasks (resp. a sequence of checkpoints) whose ideal times to completion (resp. distance between checkpoints) are given by the inter-arrival times of a stationary point process. We call the models introduced in this section {\em sequential} \re and \che.  
\subsection{Point process and stationarity}
First, we briefly review the necessary concepts in point process theory. For a more complete treatment on the subject see \cite{daley1},\cite{daley2},\cite{mecke} among others. Consider a general probability space endowed with a measurable flow \break $(\Omega,\mathcal{F},\PP,\{\theta_t\}_{t\in \R})$. Let $\textbf{N}(\R\ltimes (\R^+)^{\bbN})$ be the set of counting measures on $\R$ with marks in $(\R^+)^{\bbN}$. An element of 
$\textbf{N}(\R\ltimes (\R^+)^{\bbN})$ is of the form $\psi=\sum_{n\in \Z}\delta_{(X_n,K_n)}(\cdot)$, in which $\delta_{Z}(\cdot)$ is the Dirac measure with mass at $Z$, $X_n\in \R$, $K_n\in (\R^+)^{\bbN}$, and the sequence $\{X_n\}_{n\in \Z}$ does not have accumulation points. We say $K_n$ is the mark of $X_n$. For any $C\in \B(\R^+\times (\R^+)^{\bbN})$:
 $\psi(C)=\sum_{n\in \Z}\delta_{(X_n,K_n)}(C)$. We write $X_n\in \psi$ whenever $\psi(\{X_n,K_n\})\geq 1$.  

We equip $\textbf{N}(\R\ltimes (\R^+)^{\bbN})$ with the smallest $\sigma-$algebra $\mathcal{N}(\textbf{N}(\R\ltimes (\R^+)^{\bbN}))$ that makes the family of mappings 
\begin{align*}
\{\psi\mapsto \psi(C):C\in \mathcal{B}(\R^+\times (\R^+)^{\bbN}),~C~\hbox{bounded}\}
\end{align*}
measurable. A point process on $\R^+$ with marks in $(\R^+)^{\bbN}$ is a measurable mapping $\Phi:\Omega\to \textbf{N}(\R^+\ltimes (\R^+)^{\bbN})$.  

The realization of a marked point process in $\textbf{N}(\R\ltimes (\R^+)^{\bbN})$ corresponds to a sequence  $$\{X_n(\omega),K_n(\omega)\}_{n\in \Z}\subset \R\times (\R^+)^{\bbN}$$ such that $\{X_n(\omega)\}_{n\in \Z}$ has no accumulation points $\PP-$a.s.. We often write $X_n$ instead of $X_n(\omega)$. For all $C\in \mathcal{B}(\R\times \R^+)$, we let $\Phi(\omega,C)=\#\{(X_n,K_n)(\omega)\in C\}$. 
Moreover, we always label the points of $\Phi$ in $\R$ as follows:
\begin{align*}
\ldots\leq X_{-2}\leq X_{-1} \leq X_0 \leq 0 \leq X_1 \leq \ldots.	
\end{align*}

We assume $\Phi$ is $\theta_t-$compatible, i.e., for all $t\in \R$,
\begin{enumerate}
\item $\PP\circ (\theta_t)^{-1}=\PP$,
\item For all $C\in \B(\R)$ and $D\in \B((\R^+)^{\bbN})$:
\begin{align*}
\Phi(\theta_t\omega, C\times D)&=\Phi(\omega, (C+t)\times D).
\end{align*}
\end{enumerate} 
These, together with
\begin{align*}
\Phi(\omega, C\times (\R^+)^{\bbN})<\infty~\hbox{for all $C\in \B(\R)$ bounded, $\PP-\hbox{a.s.}$},
\end{align*}
makes $\Phi$ a stationary marked point process. 

\begin{rem}
\label{canonical}
It is most convenient for our purposes to take $\Omega$ to be $\textbf{N}(\R^+\ltimes (\R^+)^{\bbN})$ and $\mathcal{F}$ to be $\mathcal{N}(\textbf{N}(\R\ltimes (\R^+)^{\bbN})).$
\end{rem} 

In our setting, marked point processes are constructed in the following way. We start with a stationary point process in $\R$. Let $D_n=X_{n+1}-X_n.$ We mark the point $X_n$ with a sequence of \iid~random variables $K_n=\{L_{n,i}\}_{i\geq 1}$ that model failures as in Section \ref{localtime}. 

We work with the point process under its Palm probability. Let $\lambda=\E[\Phi([0,1]\times (\R^+)^{\bbN})]$ be the intensity of $\Phi$. We assume $0<\lambda<\infty$. 
The Palm probability of $\Phi$ is defined as, for all $A\in \mathcal{N}(\textbf{N}(\R\ltimes (\R^+)^{\bbN}))$,
\begin{align}
\label{sequenceofpointmapprob}
\PP^{0}[A]&=\frac{1}{\lambda |B|} \E\left[\sum_{n\in \Z}\textbf{1}\{X_n\in B\} \textbf{1}\{\Phi\circ \theta_{X_n}\in A\}\right],
\end{align}
for any $B\in \mathcal{B}(\R)$ with positive Lebesgue measure $|B|$, where
\begin{align*}
\Phi\circ \theta_{X_n}=\{(X_m-X_{n},K_{m})\}_{m\in \Z}.
\end{align*}
The probability measure $\PP^0$ can be regarded as the distribution of the process given there is a point at the origin. In fact, $\PP^0[0\in \Phi]=1$. For more on Palm probabilities, see \cite{daley1}, \cite{daley2}, \cite{mecke}, among others. 
 
\subsection{Point-shifts}
 
To provide a unified definition of asymptotic efficiency for sequential \re \break and \che, we resort to the theory of dynamics on point processes induced by point-shifts (for more on the subject, see \cite{BH} and \cite{thorisson1999point}).

Define $\textbf{N}^0(\R^+\ltimes (\R^+)^{\bbN})$ as the subspace of $\textbf{N}(\R^+\ltimes (\R^+)^{\bbN})$ of all counting measures with mass at the origin. Let $\mathcal{N}(\textbf{N}^0(\R\ltimes (\R^+))^{\bbN})$ be the corresponding trace $\sigma-$algebra. 

Let $\theta:\textbf{N}^0(\R^+\ltimes (\R^+)^{\bbN})\to \textbf{N}^0(\R^+\ltimes (\R^+)^{\bbN})$ be the discrete left-shift operator defined by
\begin{align*}
	\theta \psi = \{(X_m-X_{1},K_{m})\}_{m\in \Z},
\end{align*}
with $\theta^n \psi = \{(X_m-X_{n},K_{m})\}_{m\in \Z},~n\in \Z.$ Let $s:\textbf{N}^0(\R\ltimes (\R^+)^{\bbN})\to \R$ be a measurable function such that
\begin{align*} 
	s(\psi)=X_{\alpha_1},~~\hbox{where $\psi(\{X_{\alpha_1},K_{\alpha_1}\})\geq 1$},
\end{align*}
that is, $s$ maps a counting measure to some element of its support. Such a map is called a \emph{point-map}. A point-map $s$ induces a {\em compatible point-shift}, $S$, that maps, in a translation invariant way, every point of a counting measure to another by
\begin{align}
\label{pointshifdef}
S(\psi,X_n)=s(\theta^n \psi)+X_n,	
\end{align}
for all $X_n$ in the support of $\psi$. 
Then, we define the translation by the point-shift $s$, $\theta_s:\textbf{N}^0(\R\ltimes (\R^+)^{\bbN})\to \textbf{N}^0(\R\ltimes (\R^+)^{\bbN})$ as
\begin{align}
\label{translationbyshift}
\theta_s\psi:=\{\psi-X_{\alpha_1}\}=\{(X_m-X_{\alpha_1},K_{m})\}_{m\in \Z}. 	
\end{align}

Inductively, assuming that $s^{n-1}(\psi)$ is defined and letting $\theta^{n-1}_s\psi=\{\psi-s^{n-1}(\psi)\}$, we let $s^n(\psi)=s(\theta^{n-1}_s\psi)+s^{n-1}(\psi)$ and
$\theta^n_s(\psi)=\{\psi-s^n(\psi)\}$. 	

In words, $s$ takes the counting measure and maps it to an element of its support, $X_{\alpha_1}$. Then $\theta_s$ shifts the counting measure so that $X_{\alpha_1}$ is  the origin.  Applying $s$ again to the shifted counting measure, we get some point on the support of $\psi$, say $X_{\alpha_2}$, and $\theta^2_s$ shifts $\psi$ so that $X_{\alpha_2}$ is the origin, and so on. 

\subsection{Ideal times and actual times}

As discussed in Section \ref{localtime}, in both the one task \re and \break \ch, we have the ideal time (when no failures take place) and the actual time (when accounting for failures). In our sequential models, we have an ideal time and an actual time for each iteration. We define these using point-shifts. 

First, let 
\begin{align}
\label{taun}
\tau_n=\inf\{k\geq 1: L_{n,k}>X_{n+1}-X_{n}\}.
\end{align}   
 
The sequential \re point-map is given by $s_R(\Phi)=X_1$, so $S_R(X_n,\Phi)=X_{n+1}$ for all $n$. The translation by this point-map is simply the discrete left-shift operator, i.e., $\theta_{s_R}^n=\theta^n$. For the $n^{th}-$task, the ideal time is $D^R_n=D_n=X_{n+1}-X_{n}$ and the actual time is $T^R_n=\sum_{i=1}^{\tau_n-1}L_{n,i}+D_n.$

The sequential \ch point-map is $s_C(\Phi)=X_{\nu_0}$, where 
\begin{align}
\label{checkpointpointmappoint}
\nu_0=\sup\{k\geq 1: L_{0,\tau_0}\geq X_{k}\}.
\end{align}
 
\begin{figure}
  \centering
      \includegraphics[width=1\textwidth]{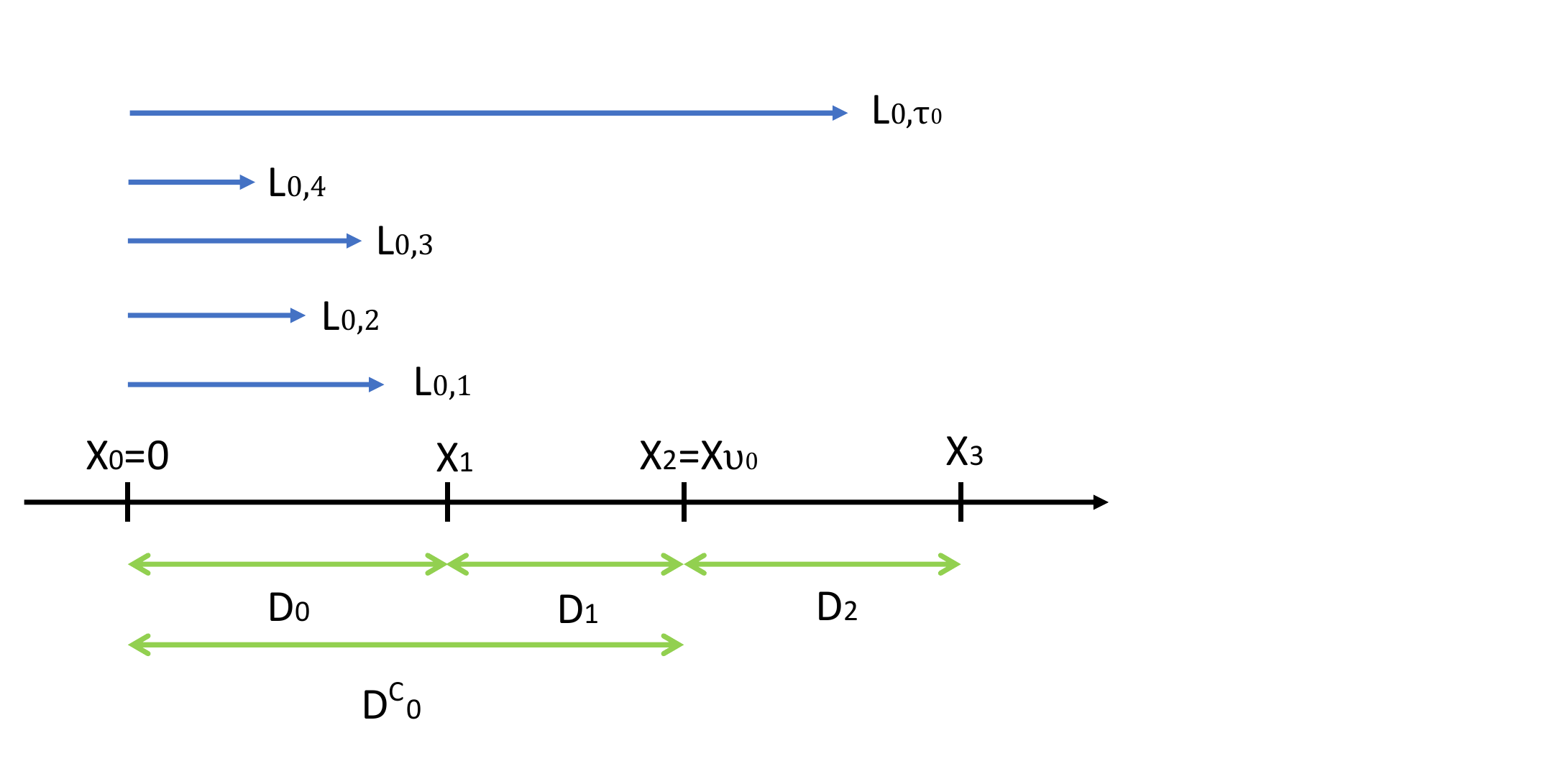}
  \caption{The progress of \ch at the $1^{st}-$iteration. There are four failures before the first checkpoint is surpassed, hence $\tau_0=5$. Then, there are no failures until the system is between checkpoints $X_2$ and $X_3$, so $\nu_1=2$. Here, the first ideal-time is $D^C_0=X_2-X_0$ and the first actual time is $T^C_0=\sum_{i=1}^{5  } L_{0,i}$. }
\end{figure}  

Notice that $\tau_0-1$ is the number of failures before the first checkpoint is surpassed, and $\nu_0$ the index of the next checkpoint secured once the system passes the first one. The $1^{st}-$ideal time is $D^C_0=X_{\nu_0}$ and the $1^{st}-$ actual time is $T^C_0=\sum_{i=1}^{\tau_0} L_{0,i}$. Figure 3 illustrates the first iteration in sequential \che. 
\begin{figure}
  \centering
      \includegraphics[width=1\textwidth]{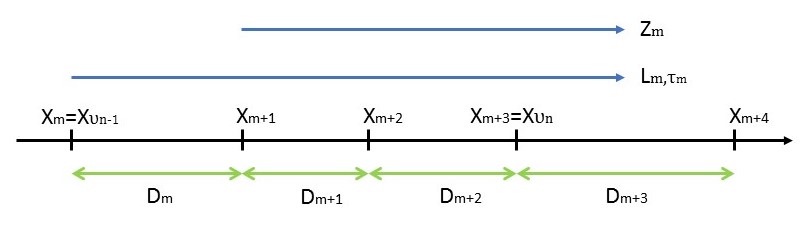}
  \caption{The progress of \ch at the $n^{th}-$iteration.}
\end{figure} 
Now, for each $n$, let 
\begin{align}
\label{definitionofz}
Z_n=L_{n,\tau_n}-D_n.
\end{align}
More generally, as illustrated in Figure 4, at the $n^{th}-$iteration, the $n^{th}-$ideal time is $D^C_n:=X_{\nu_{n}}-X_{\nu_{n-1}}$, where
\begin{align}
\label{nugreekn}
\nu_n=\sup\{k\geq\nu_{n-1}+1: Z_{\nu_{n-1}}>X_k-X_{\nu_{n-1}+1}\},	
\end{align}
with the $n^{th}-$actual time being $T^C_{n}=\sum_{i=1}^{\tau_{\nu_{n-1}}}L_{\nu_{n-1},i}$. We set $\nu_{-1}=0$.

The table below summarizes our notation. 
\begin{center}
 \begin{tabular}{|c|c|c|} 
 \hline
 & \re & \ch \\
 \hline
 Point-map & $s_R(\Phi)=X_1$ & $s_C(\Phi)=X_{\nu_0}$ \\ 
 \hline
 $n^{th}-$ideal time & $D^R_n=X_{n+1}-X_{n}$ & $D^C_n=X_{\nu_n}-X_{\nu_{n-1}}$ \\
 \hline
$n^{th}-$actual time & $ T^R_n=\sum_{i=1}^{\tau_n-1} L_{n,i}+D_n$ & $T^C_{n}=\sum_{i=1}^{\tau_{\nu_{n-1}}}L_{\nu_{n-1},i}$ \\
 \hline
 Point-map translation& $\theta_{s_R}^n=\theta^n$ & $\theta_{s_C}^n=\theta^{\nu_n}$ \\

 \hline
\end{tabular}
\end{center}
\subsection{Asymptotic efficiency}

In this unified framework, we define asymptotic efficiency as the limit ratio of the sum of ideal times to the sum of actual times for both models.
\begin{defi}[Asymptotic Efficiency]
\label{efficiency} 
The asymptotic efficiency is given by, for $i\in \{R,C\}$,
\begin{align}
\label{assymptoticefficiencydefinition}
e(\omega)&=\lim_{N\to \infty} \frac{\sum_{n=0}^{N-1} D^i_n(\omega)}{\sum_{n=0}^{N-1} T^i_n(\omega)}~\PP^0-\hbox{a.s.},
\end{align}
whenever the limit exists. 
\end{defi}
Notice that, when it exists, $0\leq e\leq 1$ $\PP^0-$a.s., as $T^i_n\geq D^i_n$ $n\geq 0$.  

Let $s$ be a point-map. Consider the sequence of probability measures on $(\textbf{N}^0(\R\ltimes (\R^+)^{\bbN}),\mathcal{N}^0(\R\ltimes (\R^+)^{\bbN})$ defined by 
\begin{align}
\label{sequenceofpointshifprobabilities}
\PP^{s,n}=\PP^0\circ (\theta_{s}^n)^{-1}. 
\end{align}Then $\PP^{s,n}$ can be interpreted as the distribution of the point process given that there is a point of the $n^{th}-$image of $S$ at the origin. Suppose $\{\PP^{s,n}\}_{n\geq 0}$ has a weak limit $\PP^{s,\infty}$. As we shall see in detail, when the asymptotic efficiency exists, it is then the ratio of the expectations of $D^i_0$ and $T^i_0$ under $\PP^{s_i,\infty}$, $i\in \{R,C\}$. 

\subsection{General Assumptions}
\label{generalassumptions}

In order to establish the existence of $e$ in sequential \re we assume that the marked point process $\Phi$ is such that, under $\PP^0$, 
\begin{enumerate}
\item the $\hbox{i.i.d.}$ sequence of failure marks, $\{L_{n,i}\}_{i\geq 1}$ is independent of $D_n$ for all $n\geq 0$;
\item  both $D_0$ and $L_{0,1}$ have right-unbounded support;
\item  $\E^0[D_0]$, $\E^0[L_{0,1}]<\infty$.
\end{enumerate}
This set of assumptions allows us to leverage the results of Section \ref{localtime}.

For \ch, besides items $1.$, $2.$, and $3.$ above, we assume that $\Phi$ is a marked renewal process, i.e., under $\PP^0$, $\{D_n\}_{n\in \Z}$ is $\hbox{i.i.d.}$. Moreover, we assume that $\{L_{n,i}\}_{i\geq 1}$ is independent of $D_m$ for all $m\geq n\geq 0$. In words, the failure marks of $X_n$ are independent of the checkpoint intervals ahead. 
  
\section{Sequential \texttt{restart}: main result}
\label{restartmainresults}

Given that the translation by the sequential \re point-map is the discrete left-shift operator $\theta$, the sequence $\{\PP^{s_R,n}\}_{n\geq 0}$ (Equation (\ref{sequenceofpointshifprobabilities})) is constant, with all its elements being equal to $\PP^0$. This result holds as $\theta$ is bijective, so it preserves the Palm measure \cite{t2000coupling}. 

Moreover, from the fact that $\theta$ preserves $\PP^0$, there exists a random variable $L_0$ such that 
\begin{align} 
\label{Ugenerator}
\PP^0[L_0>t]&=\PP^0[L_{n,i}>t],~~~\hbox{$\forall$ $i,n\in \bbN$ and $t\in \R^+$.}
\end{align} 
In the same vein, the sequence $\{D_n\}_{n\geq 0}$ is identically distributed (but not necessarily $\hbox{i.i.d.}$) under $\PP^0$. Consequently, the \re actual time sequence, $\{T^R_n\}_{n\geq 0}$ is also identically distributed under $\PP^0$. 

\begin{thm}
\label{deterministicwalkcor} 
Let $\mathcal{I}$ be the invariant $\sigma-$algebra of $(\PP^0,\theta)$. If $L_0$ has a strict $\PP^0-$tail heavier than $D_{0}$, the asymptotic efficiency exists and it is given by the random variable
\begin{equation}\label{tdwpositivespeed}e=\frac{\E^0[D_0|\mathcal{I}]}{\E^0[T^R_0|\mathcal{I}]},~\PP^0-\hbox{a.s.}.\end{equation}  

If $(\PP,\{\theta_t\}_{t\in \R})$ or, equivalently $(\PP^0,\theta)$ is ergodic, we have $\E^0[D_0|\mathcal{I}]=\E^0[D_0]$ and $\E^0[T^R_0|\mathcal{I}]=\E^0[T^R_0]$, so that the asymptotic efficiency is constant.  In this case, if $\E^0[T^R_0]=\infty$, which holds if $D_0$ does have a $\PP^0-$tail heavier than $L_{0}$, $e=0$ $\PP^0-\hbox{a.s.}$
\end{thm}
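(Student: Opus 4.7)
The plan is to apply Birkhoff's ergodic theorem separately to the numerator and the denominator of \eqref{assymptoticefficiencydefinition}. The starting point is the fact, already noted just before the statement, that the discrete left-shift $\theta$ preserves $\PP^0$. Writing the quantities $D_n$ and $T^R_n$ as functionals of the shifted configuration $\theta^n\Phi$ gives the identities $D_n = D_0\circ\theta^n$ and $T^R_n = T^R_0\circ\theta^n$, where measurability of $T^R_0$ comes from $\tau_0$ being a stopping time in the failure marks of $X_0$.

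Since $\E^0[D_0]<\infty$ by the standing assumptions of Section \ref{generalassumptions}, Birkhoff's theorem for the measure-preserving system $(\PP^0,\theta)$ immediately yields
\[
\frac{1}{N}\sum_{n=0}^{N-1} D_n \;\longrightarrow\; \E^0[D_0\mid\mathcal{I}] \qquad \PP^0\text{-a.s.}
\]
For the denominator, the key input is Theorem \ref{assymptotics1}: when $L_0$ has a strict $\PP^0$-tail heavier than $D_0$, $\E^0[T^R_0]<\infty$, so a second application of Birkhoff gives
\[
\frac{1}{N}\sum_{n=0}^{N-1} T^R_n \;\longrightarrow\; \E^0[T^R_0\mid\mathcal{I}] \qquad \PP^0\text{-a.s.}
\]
Dividing the two limits proves \eqref{tdwpositivespeed}, the denominator being almost surely positive because $T^R_0\geq D_0>0$ (the point process being simple).

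When $(\PP^0,\theta)$ is ergodic, the invariant $\sigma$-algebra $\mathcal{I}$ is $\PP^0$-trivial, so both conditional expectations collapse to their unconditional versions, making $e$ a deterministic constant. For the remaining ergodic case $\E^0[T^R_0]=\infty$, covered in particular by Theorem \ref{assymptotics1} whenever $D_0$ has a $\PP^0$-tail heavier than $L_0$, I would use the standard truncation version of Birkhoff: for each $M>0$, the bounded function $T^R_0\wedge M$ is integrable, so
\[
\liminf_{N\to\infty}\frac{1}{N}\sum_{n=0}^{N-1} T^R_n \;\geq\; \lim_{N\to\infty}\frac{1}{N}\sum_{n=0}^{N-1} (T^R_0\wedge M)\circ\theta^n \;=\; \E^0[T^R_0\wedge M],
\]
and letting $M\to\infty$ through monotone convergence gives $\frac{1}{N}\sum T^R_n \to \infty$ $\PP^0$-a.s., while $\frac{1}{N}\sum D_n$ converges to the finite constant $\E^0[D_0]$. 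The ratio therefore tends to $0$.

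There is no serious obstacle here; the whole argument is essentially a packaging of Theorem \ref{assymptotics1} with the ergodic theorem, the only genuine subtlety being the verification that $T^R_n = T^R_0\circ\theta^n$, which rests on the fact that the $n$-th shift repositions both the interval $[X_n,X_{n+1})$ and the failure marks $(L_{n,i})_{i\geq 1}$ precisely into the roles played by $[X_0,X_1)$ and $(L_{0,i})_{i\geq 1}$ at the origin.
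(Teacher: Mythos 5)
Your argument is correct and follows essentially the same route as the paper: apply Birkhoff separately to the numerator and denominator after noting that $\theta$ preserves $\PP^0$ and $T^R_n = T^R_0\circ\theta^n$, and handle the case $\E^0[T^R_0]=\infty$ by truncation at level $M$. The only cosmetic difference is that the paper phrases the infinite-expectation case as a proof by contradiction, whereas you give the (slightly cleaner) direct argument that the time-average of $T^R_n$ diverges while that of $D_n$ stays bounded.
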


\begin{proof}
If $L_0$ has a strict $\PP^0-$tail heavier than $D_{0}$, by Theorem \ref{assymptotics1},
$\E^0[T^R_0]<\infty$, and, as $\E^0[D_0]<\infty$ by assumption, by Birkhoff's Pointwise Ergodic Theorem,
\begin{align*}
e&=\lim_{N\to \infty} \frac{\sum_{n=0}^{N-1} D^i_n(\omega)}{\sum_{n=0}^{N-1} T^i_n(\omega)}=\lim_{N\to \infty} \frac{\sum_{n=0}^{N-1} D_0\circ \theta^n}{\sum_{n=0}^{N-1} T^R_0\circ \theta^n}=\frac{\E^0[D_0|\mathcal{I}]}{\E^0[T^R_0|\mathcal{I}]},~\PP^0-\hbox{a.s}. 
\end{align*}

When $(\PP,\{\theta\}_{t\in \R})$ is ergodic, $\E^0[D_0|\mathcal{I}]$ (resp. $\E^0[T^R_0|\mathcal{I}]$) equals $\E^0[D_0]$ (resp. $\E^0[T^R_0])$
If $\E^0[T^R_0]=\infty$, which holds if $D_0$ does have a $\PP^0-$tail heavier than $L_{0}$, $\E^0[T^R_n]=\infty$ for all $n\geq 0$. Suppose, by contradiction, that 
\begin{align*}
\limsup_{N\to \infty} \frac{\sum_{n=0}^{N-1} D_n}{\sum_{n=0}^{N-1} T^R_n}>\epsilon~\PP^0-\hbox{a.s.}
\end{align*}
for some $\epsilon>0$. It follows that
\begin{align*}
\limsup_{N\to \infty} \frac{1}{N}\sum_{n=0}^{N-1} D_n>\epsilon \limsup_{N\to \infty} \frac{1}{N}\sum_{n=0}^{N-1} T^R_n.
\end{align*}
Let $M>0$ be a fixed integer. Then, as
\begin{align*}
\limsup_{N\to \infty} \frac{1}{N}\sum_{n=0}^{N-1} T^R_n>\limsup_{N\to \infty} \frac{1}{N}\sum_{n=0}^{N-1} \min\{T^R_n,M\}, 
\end{align*}
\begin{align*}
\limsup_{N\to \infty} \frac{1}{N}\sum_{n=0}^{N-1} D_n>\epsilon \limsup_{N\to \infty} \frac{1}{N}\sum_{n=0}^{N-1} \min\{T^R_n,M\}.
\end{align*}
Now $\min\{T^R_n,M\}$ is integrable, so by by Birkhoff's Pointwise Ergodic Theorem, we have
\begin{align}
\label{contradictionargument}
\E^0[D_0]>\epsilon \E^0[\min\{T^R_0,M\}].
\end{align}
Since $\E^0[D_0]<\infty$ and $\E^0[T^R_0]=\infty$, letting $M\to \infty$ on the RHS of (\ref{contradictionargument}) we have a contradiction. 
\end{proof} 

\begin{rem}
Notice that when $(\PP,\{\theta_t\}_{t\in \R})$ is not ergodic, $e$ can be zero with positive probability. Here is a simple example. Consider a stationary marked renewal process constructed in the following way. Let $D_0\sim \exp(\lambda_d)$ under $\PP^0$, and let $c$ be a random variable taking values in $\{0,1\}$ with, $\PP^0[c=0]=\PP^0[c=1]>0$. Then, if $c=0$, $L_0\sim \exp(\lambda_1)$ under $\PP^0$ and, otherwise $L_0\sim \exp(\lambda_2)$. Assume $\lambda_1\geq \lambda_d> \lambda_2$. Then $e=0$ with probability $\PP^0[c=0]$. 
\end{rem}

\begin{rem}
Now since the set 
$$A:=\left\{\lim_{N\to \infty} \frac{\frac{1}{N}\sum_{n=0}^{N-1} D_0\circ \theta^n}{\frac{1}{N}\sum_{n=0}^{N-1} T^R_0\circ \theta^n}=\frac{\E^0[D_0|\mathcal{I}]}{\E^0[T^R_0|\mathcal{I}]}\right\}$$
is strictly $\theta-$invariant, i.e., $\theta A=A$, by property 1.6.1 in \cite{BB1}, $\PP^0[A]=1$ implies $\PP[A]=1$. Therefore, the results above also hold $\PP-\hbox{a.s.}$.
\end{rem} 

\section{Sequential \texttt{checkpointing}: main results}
\label{checkpointsection}

In what follows:
\begin{itemize}
\item  $\Phi$ satisfies the general assumptions for \ch in Section \ref{generalassumptions};
\item $D_{\nu_n}=X_{\nu_n}-X_{\nu_n-1}$, as illustrated in Figure 4, with $\nu_n$ defined in (\ref{nugreekn});
\item $\PP^{s_C,n}:=\PP^0\circ (\theta^{\nu_n})^{-1}$, with $\{\PP^{s_C,n}_+\}_{n\geq 0}$ being the restriction of $\PP^{s_C,n}$ to $\textbf{N}^0(\R^+\ltimes (\R^+)^{\bbN})$.  
\item $\PP^{s_{C},\infty}_+$ denotes the weak limit of the sequence of distributions $\{\PP^{s_C,n}_+\}_{n\geq 0}$, when it exists, and $\E^{s_{C},\infty}_+$ is the expectation operator of $\PP^{s_{C},\infty}_+$. 
\item Assuming $\{D_{\nu_n}\}_{n\ge 0}$ converges weakly under the Palm distribution to a non-degenerate random variable $D_{\infty}$ and letting $\hat{D}_{\infty}$ be an independent random variable distributed like the Palm distribution of $D_{\infty}$, we set
\begin{align}
\label{tauninfty}
\tau_\infty=\inf\{k\geq 1: L_{0,k}>\hat{D}_{\infty}\}
\end{align}
and
\begin{align}
\label{nuinfty}
\nu_\infty=\sup\{k\geq 1: L_{0,\tau_\infty}\geq X_{k}\}.
\end{align}   
\end{itemize}

\begin{rem}
Let $\PP^0_+$ be the restriction of $\PP^0$ to $\textbf{N}^0(\R^+\ltimes (\R^+)^{\bbN})$. Then, under the assumptions of Section \ref{generalassumptions}, $\PP^0_+$ is an independently marked renewal process and, therefore, satisfies the strong Markov property. In this section, all events consider under $\PP^0$ belong to the trace $\sigma-$algebra $\mathcal{N}(\textbf{N}^0(\R^+\ltimes (\R^+)^{\bbN})$. Hence, we keep the notation $\PP^0$ when there is no ambiguity.  
\end{rem}

\begin{thm}
\label{checkpointingmainresult} 
If $\{D_{\nu_n}\}_{n\ge 0}$ converges weakly under its Palm distribution to a non-degenerate random variable $D_{\infty}$, then $\PP^{s_C,\infty}_+$ exists. 

Moreover, if $\E^{0}[D_\infty]$, $\E^0[\nu_\infty]<\infty$, and $L_{0,1}$ does have a strict $\PP^{0}-$tail heavier than $D_{\infty}$, then the asymptotic efficiency exists and it is equal to 
\begin{align*}e=\frac{\E^{s_C,\infty}_+[D^C_0]}{\E^{s_C,\infty}_+[T^C_0]}~~\PP^0-\hbox{a.s.},
\end{align*} 
If $D_{\infty}$ has a $\PP-$heavier tail than $L_{0,1}$, $e=0$ $\PP^0-$a.s.
\end{thm}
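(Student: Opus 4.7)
The plan has three stages: establish weak convergence of $\PP^{s_C,n}_+$ via the strong Markov structure of the renewal process; identify the limiting expectations and verify their finiteness; and invoke Birkhoff's theorem on the limit measure, transferring the almost sure statement back to $\PP^0$.

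For the first stage, I would observe that $\nu_n+1$ is a stopping time for the filtration $\mathcal{F}_k=\sigma(X_j,K_j:j\leq k)$, since its value is fixed once $X_{\nu_n+1}$ and the marks $K_{\nu_{n-1}}$ have been revealed. Under $\PP^0_+$, $\Phi$ is an independently marked renewal process, so by the strong Markov property the inter-arrivals $X_{\nu_n+j+1}-X_{\nu_n+j}$ and marks $K_{\nu_n+j}$ for $j\geq 1$ are $\hbox{i.i.d.}$ with the original distributions and independent of $\mathcal{F}_{\nu_n+1}$; moreover $K_{\nu_n}$ is independent of $\nu_n$, since $\nu_n$ depends only on $K_{\nu_{n-1}}$ and the $D_j$'s. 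Thus the only feature of $\theta^{\nu_n}\Phi|_{\R^+}$ whose law varies with $n$ is $D_{\nu_n}=X_{\nu_n+1}-X_{\nu_n}$, so the hypothesis $D_{\nu_n}\tow D_\infty$ lifts to $\PP^{s_C,n}_+\tow \PP^{s_C,\infty}_+$, where under the limit the first forward inter-arrival has law $D_\infty$ followed by an independent copy of the original marked renewal structure.

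Under $\PP^{s_C,\infty}_+$, this construction identifies $T^C_0\equid\sum_{i=1}^{\tau_\infty}L_{0,i}$ and $D^C_0\equid X_{\nu_\infty}$. Theorem~\ref{assymptotics1} applied with $D=\hat D_\infty$ yields $\E^{s_C,\infty}_+[T^C_0]<\infty$ under the strict heavier-tail hypothesis on $L_{0,1}$. For the ideal time, conditioning on the overshoot $\xi:=L_{0,\tau_\infty}-\hat D_\infty$ (which is independent of the $D_j$'s for $j\geq 1$) makes $\nu_\infty$ a stopping time for $D_1,D_2,\ldots$, and Wald's identity combined with $X_{\nu_\infty}\leq \hat D_\infty+\sum_{i=1}^{\nu_\infty}D_i$ gives $\E^{s_C,\infty}_+[D^C_0]\leq \E^0[D_\infty]+\E^0[\nu_\infty]\,\E^0[D_0]<\infty$. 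Since $\PP^{s_C,\infty}_+$ is $\theta^{s_C}$-invariant, a standard consequence of being a weak limit of iterated point-shifts, Birkhoff's pointwise ergodic theorem delivers the $\PP^{s_C,\infty}_+$-a.s.\ Cesaro convergence of $D^C_n$ and $T^C_n$ to the corresponding conditional expectations, hence of their ratio to $\E^{s_C,\infty}_+[D^C_0]/\E^{s_C,\infty}_+[T^C_0]$.

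The main obstacle is transferring this $\PP^{s_C,\infty}_+$-a.s.\ statement to a $\PP^0$-a.s.\ one. My plan is to argue that the $\theta^{s_C}$-invariant event on which the ratio converges carries full $\PP^0$-measure as well: since the post-$X_{\nu_n+1}$ structure under $\PP^0\circ(\theta^{\nu_n})^{-1}$ and under $\PP^{s_C,\infty}_+$ both coincide with the original $\hbox{i.i.d.}$ marked renewal process, the two measures agree on the tail $\sigma$-algebra generated by iterates of $\theta^{s_C}$, so invariant events inherit identical probabilities. For the zero-efficiency case, when $D_\infty$ has a heavier $\PP^0$-tail than $L_{0,1}$, Theorem~\ref{assymptotics1} forces $\E^{s_C,\infty}_+[T^C_0]=\infty$, and one replays the truncation contradiction of Theorem~\ref{deterministicwalkcor}: supposing the ratio exceeds $\epsilon>0$ on a positive-measure set, truncate $T^C_n$ by $M$, apply the ergodic theorem to the truncated sequence under the invariant $\PP^{s_C,\infty}_+$, and send $M\to\infty$ to contradict $\E^{s_C,\infty}_+[D^C_0]<\infty$.
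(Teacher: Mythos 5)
Your overall architecture mirrors the paper's: show $\PP^{s_C,n}_+$ converges weakly by isolating $D_{\nu_n}$ as the only feature whose law varies (this is the content of the paper's Proposition~\ref{delayedrenewalprop1} and Corollary~\ref{corofconvergence}), verify integrability of $D^C_0$ and $T^C_0$ under the limit measure via Wald and Theorem~\ref{assymptotics1}, invoke Birkhoff under $\PP^{s_C,\infty}_+$, and then transfer the a.s.\ statement back to $\PP^0$. Your finiteness estimates and the zero-efficiency truncation argument match the paper.

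The genuine gap is ergodicity. You write that Birkhoff delivers Cesaro convergence of $D^C_n$ and $T^C_n$ ``to the corresponding conditional expectations, hence of their ratio to $\E^{s_C,\infty}_+[D^C_0]/\E^{s_C,\infty}_+[T^C_0]$.'' That ``hence'' is exactly what is missing: Birkhoff's theorem for a measure-preserving but possibly non-ergodic $\theta^{\nu_0}$ only gives convergence of the ratio to $\E^{s_C,\infty}_+[D^C_0\,|\,\mathcal{I}]/\E^{s_C,\infty}_+[T^C_0\,|\,\mathcal{I}]$, a random variable, not the constant claimed by the theorem. To pass from conditional to unconditional expectations you must prove $(\PP^{s_C,\infty}_+,\theta^{\nu_0})$ is ergodic. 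The paper does this in Proposition~\ref{limitdistributionmixing} by showing the system is in fact mixing, using that $\{D_{\nu_n}\}_n$ is a Markov chain (via independent marking and the strong Markov property) together with the cylinder-set computation. That proposition is not optional; without it the stated form of $e$ does not follow. A minor second point: your claim that the invariance of $\PP^{s_C,\infty}_+$ under $\theta^{\nu_0}$ is ``a standard consequence of being a weak limit of iterated point-shifts'' and that the transfer to $\PP^0$ follows because ``the two measures agree on the tail $\sigma$-algebra'' are both morally right, but both need to be established — the paper does the former as part of Proposition~\ref{limitdistributionmixing} and the latter via Lemma~\ref{equalitywhenmeasure1}, which relies on the strict $\theta^{\nu_0}$-invariance of the convergence event so that $\PP^0[\theta^{\nu_n}A]=\PP^0[A]$ for every $n$, then passes to the limit.
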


For the sake of brevity, in what comes next, we denote the sequence of failure marks $\{L_{n,i}\}_{i\geq 1}$ by $\textbf{L}_{n}.$

\begin{lem}
\label{stoppingtimelemma}
Under $\PP^0$, consider the filtration $\{\mathcal{F}_m\}_{m\geq 0}$ in which 
\begin{align*}
\mathcal{F}_m=\sigma((D_0,\textbf{L}_0),\ldots,(D_m,\textbf{L}_m))~\forall ~m.
\end{align*}  
Then, for all $n\geq 0$, $\nu_n+1$ is a stopping time with respect to $\{\mathcal{F}_m\}_{m\geq 0}$.
\end{lem}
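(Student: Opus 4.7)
The plan is to prove, by induction on $n \geq 0$, the slightly stronger claim that $\{\nu_n = j\} \in \mathcal{F}_j$ for every $j \geq 0$. The stopping-time property of $\nu_n+1$ then follows at once from
\begin{equation*}
\{\nu_n + 1 \leq m\} = \{\nu_n \leq m-1\} = \bigcup_{j=0}^{m-1}\{\nu_n = j\} \in \mathcal{F}_{m-1} \subseteq \mathcal{F}_m.
\end{equation*}

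For the base case $n = 0$, I would first observe that $\tau_0 = \inf\{k \geq 1 : L_{0,k} > D_0\}$ and therefore $Z_0 = L_{0,\tau_0} - D_0$ are measurable functions of the pair $(D_0, \textbf{L}_0)$, hence $\mathcal{F}_0$-measurable. With $\nu_{-1} = 0$ and $X_0 = 0$ under $\PP^0$, definition (\ref{nugreekn}) yields, for $j \geq 1$,
\begin{equation*}
\{\nu_0 = j\} = \{Z_0 > X_j - X_1\} \cap \{Z_0 \leq X_{j+1} - X_1\},
\end{equation*}
while $\{\nu_0 = 0\} = \emptyset$. Since $X_{j+1} - X_1 = \sum_{i=1}^{j} D_i$ is $\sigma(D_1,\ldots,D_j)$-measurable, the event $\{\nu_0 = j\}$ lies in $\sigma(D_0, \textbf{L}_0, D_1, \ldots, D_j) \subseteq \mathcal{F}_j$.

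For the inductive step, assuming $\{\nu_{n-1} = l\} \in \mathcal{F}_l$ for every $l$, I would partition on the value of $\nu_{n-1}$ in (\ref{nugreekn}) to write
\begin{equation*}
\{\nu_n = j\} = \bigcup_{l=0}^{j-1} \Bigl( \{\nu_{n-1} = l\} \cap \{Z_l > X_j - X_{l+1}\} \cap \{Z_l \leq X_{j+1} - X_{l+1}\} \Bigr).
\end{equation*}
The event $\{\nu_{n-1} = l\}$ is in $\mathcal{F}_l$ by the induction hypothesis; $Z_l$ is a measurable function of $(D_l, \textbf{L}_l)$ and hence $\mathcal{F}_l$-measurable; and $X_{j+1} - X_{l+1} = \sum_{i=l+1}^{j} D_i$ belongs to $\sigma(D_{l+1},\ldots,D_j)$. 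Consequently each term in the union lies in $\mathcal{F}_j$, and so does $\{\nu_n = j\}$.

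The argument is really just measurability bookkeeping driven by the recursion (\ref{nugreekn}); the only delicate point, which I would want to flag carefully, is that advancing from iteration $n-1$ to iteration $n$ requires only the mark $\textbf{L}_{\nu_{n-1}}$ attached to the single point $X_{\nu_{n-1}}$, together with the subsequent gaps $D_{\nu_{n-1}+1},\ldots,D_{\nu_n}$. No information beyond index $\nu_n$ is consumed, and in particular the intervening failure marks $\textbf{L}_{\nu_{n-1}+1},\ldots,\textbf{L}_{\nu_n - 1}$ play no role. I do not anticipate any further obstacle.
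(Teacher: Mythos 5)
Your proof is correct and takes essentially the same route as the paper's: an induction over $n$ in which one conditions on the value of $\nu_{n-1}$, notes that $Z_l$ is a function of $(D_l,\textbf{L}_l)$, and observes that the relevant gaps $X_{j+1}-X_{l+1}$ are $\sigma(D_{l+1},\ldots,D_j)$-measurable. The only cosmetic difference is that you prove $\{\nu_n=j\}\in\mathcal{F}_j$ directly whereas the paper parametrizes by $\nu_n+1=m$; the underlying decomposition is identical.
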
 
\begin{proof}
As $\bL_n$ is independent of $\{D_m\}_{m\geq n}$, the stopping time property of $\nu_0+1$ follows from the fact that $\tau_0$ is $\mathcal{F}_0-$measurable, $\nu_0>0$ $\hbox{a.s.}$, and, for all $m \geq 1$,
\begin{align*}
	\{\nu_0+1=m\}=\{X_m\geq L_{0,\tau_0}\}\cap \{X_{m-1}<L_{0,\tau_0}\} \subset \mathcal{F}_{m}.
\end{align*}
Now suppose $\nu_{k}+1$ is a stopping time. Then, for $m\leq k$, $\{\nu_{k+1}+1= m\}=\emptyset$, and, for $m>k$,
\begin{align*}
\{\nu_{k+1}+1= m\}&=\cup_{i=1}^{m-1}(\{\nu_{k}+1=i\}	\cap \{X_{m+1}<X_{i}+Z_{i-1}\leq X_m\}))\subset \mathcal{F}_m, 
\end{align*} 
$Z_{i-1}$ defined in (\ref{definitionofz}). 
\end{proof}

Let $\{\overline{D}_n\}_{n\in \Z}$ be a sequence of $\hbox{i.i.d}$ random variables, independent of $\{D_n\}_{n\in \Z}$ such that $\overline{D}_n$ has the same distribution as $D_0$. Let the total lifetime of the renewal process $\{\overline{D}_n\}_{n\in \Z}$ be 
\begin{align}
\label{betatotallife}
\beta(t)=\overline{D}_n~\hbox{if $X_{n}<t\leq X_{n+1}$}.
\end{align}
 
Given $Z_0=z$, we have $D_{\nu_0}=\beta(z)$. 
Therefore,
\begin{align*}
\PP^0[D_{\nu_0}> x]=\int_0^{\infty} 	\PP^0[\beta(t)>x]f_{Z_{0}}(dt),
\end{align*}
where $f_{Z_0}$ is the distribution of $Z_0$.

The interval $D_{\nu_0}$ tends to be larger than $D_0$, as failures are more likely to happen when checkpoints are more apart. In fact, $D_{\nu_0}$ stochastically dominates $D_0$, as $\PP^0[\beta(t)>x]\geq \PP^0[D_0>x]$ for all $x,t\in \R^+$ \cite{asmussen2003applied}. This is an incarnation of the inspection paradox. Hence, in contrast with \res, $\theta_{s_C}$ does not preserve $\PP^0$ and, consequently,  $\{D^C_n\}_{n\geq 0}$ is not identically distributed under the Palm measure.

A sequence of inter-arrivals $\{\tilde{D}_n\}_{n\geq 0}$ is called a delayed renewal process if $\{\tilde{D}_{n}\}_{n\geq 0}$ is a sequence of independent and non-negative random variables and $\{\tilde{D}_{n}\}_{n\geq 1}$ is $\hbox{i.i.d.}$. In Proposition \ref{delayedrenewalprop1} below, we show that not only $\PP^{s_C,n}_+$ is the distribution of an independently marked delayed renewal process, but also the distribution of the inter-arrivals after the first one is the same under $\PP^{s_C,n}_+$ and $\PP^0$. The result goes along with the interpretation of $\PP^{s_C,n}$ as the distribution of the point process given there is a point of the $n^{th}-$iteration of the point-shift $S_C$ at the origin. To illustrate our case, consider the point process shifted by $\theta^{\nu_0}$. As mentioned above, there is an inspection paradox effect in first interval $D_{\nu_0}$. Nonetheless, as shown below, the inter-arrivals distributions $(X_{\nu_0+2}-X_{\nu_0+1})$, $(X_{\nu_0+3}-X_{\nu_0+2}),\ldots, (X_{\nu_0+j}-X_{\nu_0+j}),\ldots,$ are $\hbox{i.i.d.}$ and have the same distribution under $\PP^0$. This takes place in every iteration: the first inter-arrival interval after the shift is biased and the following ones maintain their distribution, which is that of a typical inter-arrival.

\begin{prop}
\label{delayedrenewalprop1}
For all $n\geq 0$, $\{\PP^{s_C,n}_+\}_{n\geq 1}$ is the distribution of an independently marked delayed renewal process. Moreover, 
\begin{align*}
&\PP^{s_C,n}_+[D_1\in A_1, \textbf{L}_1\in B_1,\ldots,D_m\in A_m, \textbf{L}_m\in B_m,\ldots]\\
&=\PP^0[D_1\in A_1, \textbf{L}_1\in B_1,\ldots,D_m\in A_m, \textbf{L}_m\in B_m,\ldots]
\end{align*}
for all $\{A_i\}_{i\geq 1}\in \mathcal{B}(\R^{+})$ and $\{B_i\}_{i\geq 1}\in \mathcal{B}((\R)^{\bbN})$. 
\end{prop}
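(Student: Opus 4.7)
The plan is to reduce the statement to the strong Markov property for an i.i.d.\ sequence, using Lemma \ref{stoppingtimelemma}. Under $\PP^0_+$ the marked renewal hypothesis of Section \ref{generalassumptions} guarantees that $\{(D_m,\mathbf{L}_m)\}_{m\ge 0}$ is an i.i.d.\ sequence of random elements in $\R^+\times(\R^+)^{\bbN}$: the inter-arrivals $D_m$ are i.i.d., the marks $\mathbf{L}_m$ are i.i.d.\ across $m$, and the mark $\mathbf{L}_m$ is independent of $D_m$ (and of every $D_{m'}$, $m'\ge m$). Thus the natural filtration $\mathcal{F}_m=\sigma((D_0,\mathbf{L}_0),\ldots,(D_m,\mathbf{L}_m))$ is an independence-increment filtration.

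First I would unpack how the shift by $s_C$ acts on labels. By the definition of $\theta_s$ in (\ref{translationbyshift}) applied at the $n$-th iteration, the canonical relabeling identifies the point $X_{\nu_n}$ of the original configuration with the new origin. Hence, writing $(\tilde D_m,\tilde{\mathbf{L}}_m)$ for the inter-arrival/mark pair in position $m$ of the $\theta^{\nu_n}$-shifted configuration, one has $\tilde D_m=D_{\nu_n+m}$ and $\tilde{\mathbf{L}}_m=\mathbf{L}_{\nu_n+m}$ for every $m\ge 0$. In particular $\tilde D_0=D_{\nu_n}$, which is the (size-biased) interval containing the residual lifetime discussed before (\ref{betatotallife}), and $(\tilde D_m,\tilde{\mathbf{L}}_m)_{m\ge 1}=(D_{\nu_n+m},\mathbf{L}_{\nu_n+m})_{m\ge 1}$ is the post-$(\nu_n+1)$ segment of the original i.i.d.\ stream.

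Next I would invoke the strong Markov property. Since Lemma \ref{stoppingtimelemma} tells us that $\nu_n+1$ is a stopping time with respect to $\{\mathcal{F}_m\}_{m\ge 0}$, and since $\{(D_m,\mathbf{L}_m)\}_{m\ge 0}$ is i.i.d., the standard strong Markov argument (e.g.\ by conditioning on $\{\nu_n+1=k\}$ and using independence of $(D_k,\mathbf{L}_k),(D_{k+1},\mathbf{L}_{k+1}),\ldots$ from $\mathcal{F}_k$) gives
\begin{align*}
\bigl((D_{\nu_n+1+j},\mathbf{L}_{\nu_n+1+j})\bigr)_{j\ge 0}\ \text{is independent of }\mathcal{F}_{\nu_n+1},
\end{align*}
and has under $\PP^0_+$ the same joint law as $((D_j,\mathbf{L}_j))_{j\ge 0}$. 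Translating this via the identification of the previous paragraph yields the displayed equality of distributions claimed in the proposition.

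Finally, to obtain the independently marked delayed renewal structure of $\PP^{s_C,n}_+$, note that $\tilde D_0=D_{\nu_n}$ is $\mathcal{F}_{\nu_n+1}$-measurable (because on $\{\nu_n+1=k\}$ it coincides with $D_{k-1}$), hence independent of the post-$(\nu_n+1)$ stream above; this gives independence of $\tilde D_0$ from $(\tilde D_m)_{m\ge 1}$, together with the i.i.d.\ property of $(\tilde D_m)_{m\ge 1}$ with common law that of $D_0$. The independent marking is preserved because each $\tilde{\mathbf{L}}_m=\mathbf{L}_{\nu_n+m}$ is independent of the shifted $\tilde D$-sequence as in the original process. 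The main obstacle I foresee is a purely bookkeeping one: confirming that the canonical re-indexing of the shifted point process genuinely maps $\tilde D_0$ to $D_{\nu_n}$ and $(\tilde D_m)_{m\ge 1}$ to $(D_{\nu_n+m})_{m\ge 1}$; once this is carefully verified, the statement follows at once from the i.i.d.\ strong Markov property combined with Lemma \ref{stoppingtimelemma}.
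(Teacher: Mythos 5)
Your proposal is correct and follows essentially the same route as the paper's proof: both reduce the claim to the strong Markov property of the i.i.d.\ pair sequence $\{(D_m,\mathbf{L}_m)\}_{m\ge 0}$ at the stopping time $\nu_n+1$ furnished by Lemma~\ref{stoppingtimelemma}, and then track the re-indexing under $\theta^{\nu_n}$. The only stylistic difference is that the paper carries out an explicit cylinder-set computation (Equation~\eqref{propcheckpointrenewaleq2}), peeling off factors by repeated conditioning, whereas you phrase the same argument more abstractly in terms of independence of the post-$(\nu_n+1)$ stream from $\mathcal{F}_{\nu_n+1}$; your extra remark that $D_{\nu_n}$ is $\mathcal{F}_{\nu_n+1}$-measurable (hence independent of the subsequent stream) is exactly the observation the paper subsumes under ``independent marking.''
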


\begin{proof}
For $A_0,\ldots,A_j\in \mathcal{B}(\R^+)$ and $B_0,\ldots,B_j\in \mathcal{B}((\R^+)^{\bbN}))$,
\begin{align*}
&\mathbb{P}^{s_C,n}[D_{j}\in A_j,\bL_j\in B_j,\ldots, D_{0}\in A_0,\bL_0\in B_0]\\
&=\mathbb{P}^0[D_{\nu_n+j}\in A_j,\bL_{\nu_n+j}\in B_j,\ldots,D_{\nu_n}\in A_0,\bL_{\nu_n}\in B_0]\\
&=\mathbb{P}^0\left[D_{\nu_n+j}\in A_j,\bL_{\nu_n+j}\in B_j|D_{\nu_n+j-1}\in A_{j-1},\bL_{\nu_n+j-1}\in B_{j-1},\right.\\
&~~~\left.\ldots,D_{\nu_n}\in A_0,\bL_{\nu_n}\in B_0\right]\\
&\times\mathbb{P}^0[D_{\nu_n+j-1}\in A_{j-1},\bL_{\nu_n+j-1}\in B_{j-1}\ldots,D_{\nu_n}\in A_0,\bL_{\nu_n}\in B_0].
\end{align*}

As $\nu_n+1$ is a stopping time, by the strong Markov property of independently marked renewal processes, for every $j>0$,  
\begin{align*}
&\mathbb{P}^0\left[D_{\nu_n+j}\in A_j,\bL_{\nu_n+j}\in B_j|D_{\nu_n+j-1}\in A_{j-1},\bL_{\nu_n+j-1}\in B_{j-1},\right.\\
&~~~\left.\ldots,D_{\nu_n}\in A_0,\bL_{\nu_n}\in B_{j-1}\right]=\PP^0[D_j\in A_j]\PP^0[\bL_j\in B_j].
\end{align*}
By keeping conditioning and applying the strong Markov property:
\begin{align}
\label{propcheckpointrenewaleq2} 
&\mathbb{P}^{s_C,n}[D_{j}\in A_j,\bL_j\in B_j,\ldots, D_{0}\in A_0,\bL_0\in B_0]\nonumber\\
&=\PP^0[D_{\nu_n}\in A_0,\bL_{\nu_n}\in B_0]\prod_{i=1}^{j}\PP^0[D_{0}\in A_i]\PP^0[\bL_0\in B_j])\nonumber\\
&=\PP^0[D_{\nu_n}\in A_0]\PP^0[\bL_{0}\in B_0]\prod_{i=1}^{j}\PP^0[D_{0}\in A_i]\PP^0[\bL_0\in B_j],  
\end{align}
where the last equality follows from independent marking. 
\end{proof}

\begin{cor}
\label{corofconvergence}
If $\{D_{\nu_n}\}_{n\ge 0}$ converges weakly under the Palm distribution to a non-degenerate random variable $D_{\infty}$, then $\{\PP^{s_C,n}_+\}_{n\geq 1}$ converges weakly to a distribution $\PP^{s_C,\infty}_+$. Moreover, $\PP^{s_C,\infty}_+$ is the distribution of an independently marked delayed renewal process in which the first inter-arrival interval is distributed as $D_{\infty}$. 
\end{cor}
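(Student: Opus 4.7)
The plan is to read the limit directly off of Proposition \ref{delayedrenewalprop1}. That proposition shows that under $\PP^{s_C,n}_+$ the only component of the joint law that depends on $n$ is the marginal distribution of the first inter-arrival $D_{\nu_n}$: the first mark $\bL_0$ has the fixed typical distribution (independent of $D_{\nu_n}$ by the independent marking of $\Phi$), and the subsequent pairs $\{(D_j,\bL_j)\}_{j\ge 1}$ are i.i.d.\ with the law of $(D_0,\bL_0)$ under $\PP^0$, independent of everything that precedes them. Thus one can factor
\begin{equation*}
\PP^{s_C,n}_+ \;=\; \mu_n \otimes \nu,
\end{equation*}
where $\mu_n$ denotes the Palm distribution of $D_{\nu_n}$ on $\R^+$ and $\nu$ is a fixed probability measure on the product space encoding the law of $\bigl(\bL_0,\,(D_j,\bL_j)_{j\ge 1}\bigr)$.

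Next, I would invoke the hypothesis $D_{\nu_n}\Rightarrow D_\infty$ to obtain $\mu_n\Rightarrow \mu_\infty$, where $\mu_\infty$ is the law of $D_\infty$. Since $\nu$ does not depend on $n$, the product measures converge weakly: $\mu_n\otimes\nu \Rightarrow \mu_\infty\otimes\nu$. To verify weak convergence on $\textbf{N}^0(\R^+\ltimes(\R^+)^{\bbN})$, it suffices to check convergence of the finite-dimensional distributions of the inter-arrival/mark sequence (which generate the relevant $\sigma$-algebra). For any bounded continuous cylinder function $f$ depending on the first $k$ coordinates, the expectation factors as
\begin{equation*}
\E^{s_C,n}_+[f] \;=\; \int g_f(x)\,\mu_n(dx),
\end{equation*}
for a bounded continuous function $g_f$ obtained by integrating $f$ against the fixed $\nu$-piece (boundedness and continuity follow from dominated convergence applied under $\nu$, and from continuity of $f$). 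Weak convergence of $\mu_n$ then gives $\E^{s_C,n}_+[f]\to \E^{s_C,\infty}_+[f]$.

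Finally, I would identify $\PP^{s_C,\infty}_+ := \mu_\infty \otimes \nu$ as the distribution of an independently marked delayed renewal process: by construction its first inter-arrival has law $\mu_\infty$ (that of $D_\infty$), the subsequent inter-arrivals are i.i.d.\ with the law of $D_0$ under $\PP^0$ and independent of the first interval, and each atom carries an independent mark with the typical distribution of $\bL_0$. I do not anticipate any serious obstacle: the entire argument is essentially bookkeeping on top of Proposition \ref{delayedrenewalprop1}, with the only care needed being the passage from convergence of one-dimensional marginals to weak convergence on the point-process space, which the product structure makes routine.
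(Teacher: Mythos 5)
Your argument is correct and follows essentially the same route as the paper: the paper's proof states only that the result follows from Proposition~\ref{delayedrenewalprop1} by taking $n\to\infty$ in the finite-dimensional factorization~(\ref{propcheckpointrenewaleq2}), which is precisely the $\mu_n\otimes\nu$ decomposition and cylinder-function limit you spell out. Your proposal simply makes explicit the bookkeeping that the paper compresses into one sentence.
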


\begin{proof}
The results follow from Proposition \ref{delayedrenewalprop1} and taking the limit as $n\to \infty$ in (\ref{propcheckpointrenewaleq2}). 
\end{proof}

\begin{lem}
For all $n\geq 1$,
\begin{align}
\label{distributionofdnun}
\PP^0[D_{\nu_n}> x]=\int_0^{\infty} 	\PP^0[\beta(t)>x]f_{Z_{\nu_{n-1}}}(dt),
\end{align}
where $f_{Z_{\nu_{n-1}}}$ is the distribution of $Z_{\nu_{n-1}}$ under $\PP^0$, with $\beta(t)$ defined in (\ref{betatotallife}).
\end{lem}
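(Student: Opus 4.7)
My plan is to condition on $Z_{\nu_{n-1}}$ and invoke the strong Markov property of the marked renewal process at the stopping time $\nu_{n-1}$. First, I would note that $Z_{\nu_{n-1}}=L_{\nu_{n-1},\tau_{\nu_{n-1}}}-D_{\nu_{n-1}}$ is a function of $(D_{\nu_{n-1}},\bL_{\nu_{n-1}})$ and is therefore $\mathcal{F}_{\nu_{n-1}}$-measurable. By the strong Markov property for the i.i.d.\ sequence $\{(D_k,\bL_k)\}_{k\ge 0}$ (the same property used in the proof of Proposition \ref{delayedrenewalprop1}), the post-stopping-time tail $\{(D_{\nu_{n-1}+k},\bL_{\nu_{n-1}+k})\}_{k\ge 1}$ is i.i.d.\ with the law of $(D_0,\bL_0)$ and independent of $\mathcal{F}_{\nu_{n-1}}$, hence independent of $Z_{\nu_{n-1}}$.

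Conditional on $Z_{\nu_{n-1}}=t$, the definition (\ref{nugreekn}) pins $\nu_n$ down as the index for which the renewal interval of the shifted post-stopping-time sequence contains the deterministic point at distance $t$ from $X_{\nu_{n-1}+1}$; its length is $D_{\nu_n}$. Because the shifted sequence has the same joint law as the independent copy $\{\overline{D}_k\}$ used to define $\beta$ in (\ref{betatotallife}), that length has conditional distribution $\beta(t)$, so
\begin{equation*}
\PP^0[D_{\nu_n}>x\mid Z_{\nu_{n-1}}=t]=\PP^0[\beta(t)>x].
\end{equation*}
Integrating over the law of $Z_{\nu_{n-1}}$ yields the displayed identity (\ref{distributionofdnun}). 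The case $n=0$ displayed just before the lemma is exactly the base instance of this reasoning.

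The main obstacle is the bookkeeping around Lemma \ref{stoppingtimelemma}, which is phrased in terms of $\nu_{n-1}+1$ (rather than $\nu_{n-1}$) being a stopping time. Before the strong Markov step above I would revisit that proof to confirm $\{\nu_{n-1}=m\}\in\mathcal{F}_m$, using that $X_m=D_0+\cdots+D_{m-1}$ is $\mathcal{F}_{m-1}$-measurable under $\PP^0$. Once $\nu_{n-1}$ is itself a stopping time, the independent marking hypothesis of Section \ref{generalassumptions} --- which makes $\bL_n$ independent of $\{D_m\}_{m\ge n}$ --- ensures that the full post-stopping-time sequence $\{(D_{\nu_{n-1}+k},\bL_{\nu_{n-1}+k})\}_{k\ge 1}$ is i.i.d.\ and independent of everything determined up to index $\nu_{n-1}$, and the argument closes.
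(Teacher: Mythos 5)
Your proof is correct and takes essentially the same approach as the paper. The paper routes the calculation through the shifted measure $\PP^{s_C,n-1}_+$ and Proposition \ref{delayedrenewalprop1}, whereas you apply the strong Markov property directly at the stopping time $\nu_{n-1}$; since that proposition is itself established via strong Markov at the same stopping time, the two arguments are the same in substance.
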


\begin{proof}
As  $\PP^0[D_{\nu_n}> x]=\PP^{s_C,{n-1}}_+[D_{\nu_0}>x]$ and $\PP^{s_C,n-1}_+$ is the distribution of a independently delayed renewal process such that $\{(D_i, \textbf{L}_i)\}_{i\geq 1}$ has the same distribution under $\PP^{s_C,n-1}_+$ and $\PP^0$, we have
\begin{align*}
\PP^{s_C,n-1}_+[D_{\nu_0}>x]=\int_0^{\infty} \PP^0[\beta(t)>x]f^{n-1}_{Z_{0}}(dt),
\end{align*}
where $f^{n-1}_{Z_{0}}$ is the distribution of $Z_{0}$ under $\PP^{s_C,n-1}_+$. As $f^{n-1}_{Z_{0}}=f_{Z_{\nu_{n-1}}}$, the result follows. 
\end{proof}

\begin{rem}
So far, we have defined $\{\PP^{s_C,n}_+\}_{n\geq 0}$ and $\PP^{s_C,\infty}_+$ (when it exists) on the space of counting measures. Once it is established that these distributions are concentrated on independently marked delayed renewal processes, we can, without loss of generality, define these measures on the space of discrete sequences in which each term belongs to $\R^+\times (\R^+)^{\bbN}$, equipping it with the standard cylindrical Borel $\sigma-$algebra.  We work on this space in the next proposition. 
\end{rem}

\begin{prop}
\label{limitdistributionmixing}
If $\{D_{\nu_n}\}_{n\geq 0}$ converges weakly  to a non-degenerate random variable, $\theta^{\nu_0}$ preserves $\PP^{s_C,\infty}_+$ and $(\PP^{s_C,\infty}_+,\theta^{\nu_0})$ is mixing. 
\end{prop}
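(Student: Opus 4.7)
The plan is to prove preservation and mixing separately, both exploiting the explicit description of $\PP^{s_C,\infty}_+$ from Corollary~\ref{corofconvergence} as an independently marked delayed renewal process with first inter-arrival $D_\infty$ and i.i.d.\ tail $\sim\PP^0$, together with the strong Markov property at the stopping time $\nu_0+1$ (Lemma~\ref{stoppingtimelemma}).

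For preservation, the composition rule for iterated point-shifts gives $\PP^{s_C,n+1}_+=\PP^{s_C,n}_+\circ(\theta^{\nu_0})^{-1}$ for every $n$, and the left-hand side converges weakly to $\PP^{s_C,\infty}_+$. To identify the weak limit of the right-hand side I would characterize $\PP^{s_C,\infty}_+\circ(\theta^{\nu_0})^{-1}$ via its finite-dimensional marginals. By strong Markov at $\nu_0+1$, the tail $\{(D_{\nu_0+i},\bL_{\nu_0+i})\}_{i\geq 1}$ is i.i.d.\ $\sim\PP^0$; by independent marking, $\bL_{\nu_0}$ is still distributed as $\bL_0|_{\PP^0}$ and independent of $D_{\nu_0}$. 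The crucial step is to show $D_{\nu_0}|_{\PP^{s_C,\infty}_+}\sim D_\infty$. Letting $F_n$ denote the law of $D_{\nu_0}$ under $\PP^{s_C,n}_+$, equation~(\ref{distributionofdnun}) represents $F_{n+1}=T(F_n)$ for a weakly continuous operator $T$. Since $F_n\to F_\infty$ weakly, uniqueness of limits forces $F_\infty=T(F_\infty)$, which gives exactly the desired marginal. Hence $\PP^{s_C,\infty}_+\circ(\theta^{\nu_0})^{-1}$ coincides with $\PP^{s_C,\infty}_+$ on every finite-dimensional marginal, and so coincides with it.

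For mixing, it suffices to check
\begin{equation*}
\PP^{s_C,\infty}_+\bigl(A\cap(\theta^{\nu_0})^{-n}B\bigr)\longrightarrow\PP^{s_C,\infty}_+(A)\,\PP^{s_C,\infty}_+(B)
\end{equation*}
on cylinder events $A,B$ depending on the first $k+1$ coordinates. Using $(\theta^{\nu_0})^n=\theta^{\nu_{n-1}}$ and conditioning on $\mathcal{F}_{\nu_{n-1}}$, the strong Markov property yields
\begin{equation*}
\PP^{s_C,\infty}_+\bigl(A\cap(\theta^{\nu_0})^{-n}B\bigr)=\E^{s_C,\infty}_+\bigl[\mathbf{1}_A\,g_B(D_{\nu_{n-1}},\bL_{\nu_{n-1}})\bigr],
\end{equation*}
where $g_B(d,\ell)$ is the probability of $B$ in the delayed renewal process with initial pair $(d,\ell)$ and i.i.d.\ tail $\sim\PP^0$. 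Preservation gives $\E^{s_C,\infty}_+[g_B(D_{\nu_{n-1}},\bL_{\nu_{n-1}})]=\PP^{s_C,\infty}_+(B)$, so the claim reduces to asymptotic independence of $(D_{\nu_{n-1}},\bL_{\nu_{n-1}})$ from $\mathcal{F}_k$.

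This decoupling is the main obstacle. Conditioning on the initial segment biases $\nu_{n-1}$ and hence the law of the first pair at the shifted origin, so unconditional preservation does not suffice. The resolution is to interpret $\{(D_{\nu_{n-1}},\bL_{\nu_{n-1}})\}_{n\geq 1}$ as a Markov chain on the space of first pairs, with transition kernel driven by the i.i.d.\ tail $\{(D_i,\bL_i)\}_{i\geq 1}$ (independent of $\mathcal{F}_k$) and admitting $D_\infty\otimes\bL_0|_{\PP^0}$ as stationary distribution by preservation. Since the initial segment only affects the starting state of this chain, the same fixed-point/continuity argument used in Step~1, now applied to arbitrary initial laws, yields $(D_{\nu_{n-1}},\bL_{\nu_{n-1}})\Rightarrow D_\infty\otimes\bL_0|_{\PP^0}$ conditionally on $\mathcal{F}_k$, and bounded convergence (using $g_B\leq 1$) delivers the product form.
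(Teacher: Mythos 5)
Your preservation argument is essentially the paper's proof reorganized: the paper computes cylinder-set probabilities directly via Proposition~\ref{delayedrenewalprop1} and passes to the limit in the factored form~(\ref{propcheckpointrenewaleq2}), whereas you encode the same content in a fixed-point identity $F_\infty=T(F_\infty)$ for the one-step kernel~(\ref{distributionofdnun}). Both reduce to the same computation; your version additionally requires weak continuity of $T$ on the initial marginal, which is a mild technical check you would need to make explicit but which matches the paper's use of weak convergence of $D_{\nu_n}$ inside the product.

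The mixing argument has a genuine gap. You correctly identify the crux -- conditioning on the initial segment $\mathcal{F}_k$ biases $\nu_{n-1}$ and hence the law of the ``first pair'' seen at the shifted origin, so stationarity alone does not decouple the two cylinder events -- but the proposed resolution does not close it. Your Step~1 argument is a uniqueness-of-limits argument: it uses the \emph{hypothesis} that $\{D_{\nu_n}\}_{n\ge 0}$ converges weakly under $\PP^0$, together with continuity of $T$, to conclude that the limit is a fixed point of $T$. That establishes existence and an identity for the fixed point; it does \emph{not} establish that $T^n(F)\to F_\infty$ for an \emph{arbitrary} initial law $F$, which is exactly what the conditional decoupling requires. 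Convergence from a single distinguished starting distribution (here, the Palm one) is strictly weaker than convergence from the random conditional law of $(D_{\nu_{n-1}},\bL_{\nu_{n-1}})$ given $\mathcal{F}_k$; to bridge that gap one needs something like uniqueness of the invariant law plus Harris recurrence, a Doeblin-type minorization, a coupling/regeneration argument for the overshoot chain $\{Z_{\nu_n}\}$, or a contraction estimate for $T$ -- none of which is supplied. For comparison, the paper's own mixing step also reduces to the same chain-mixing claim: it invokes ``as $\{D_{\nu_n}\}_{n\geq 0}$ is a Markov chain'' to factorize~(\ref{propergodicthetaseq2}) for large $m$, which is again asserting asymptotic independence of the chain rather than deriving it. So your diagnosis of the obstacle is correct and sharper than the paper's exposition, but the fixed-point step you substitute for it proves a weaker statement than what is needed.
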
 
\begin{proof}
First we show $\theta^{\nu_0}$ preserves $\PP_+^{s_C,\infty}$. Consider the product cylinder set
\begin{align*}
C_{j_0,\ldots,j_l}&:=\left\{(D_n,\bL_{n})_{n\geq 0}: D_{j_0}\in A_{j_0},\textbf{L}_{j_0}\in B_{j_0}, \right. \ldots, \left.D_{j_l}\in A_{j_l}, \textbf{L}_{j_l}\in B_{j_l}\right\},
\end{align*}
where $0\geq j_0>j_1,\ldots> j_l\in \bbN_+$. 
\begin{align*}
&\PP^{s_C,n}_+[\theta^{\nu_0}C_{j_0,\ldots,j_l}]\\
&=\PP^{0}\left[\theta^{\nu_0}\left\{(D_n,\bL_{n})_{n\geq 0}: D_{\nu_n+j_0}\in A_{j_0},\textbf{L}_{\nu_n+j_0}\in B_{j_0}, \right. \ldots,\right.\\
&~~~\left. \left.D_{\nu_n+j_l}\in A_{j_l}, \textbf{L}_{\nu_n+j_l}\in B_{j_l}\right\}\right]\\
&=\PP^{s_C,1}\left[\left\{(D_n,\bL_{n})_{n\geq 0}: D_{\nu_n+j_0}\in A_{j_0},\textbf{L}_{\nu_n+j_0}\in B_{j_0}, \right. \ldots,\right.\\
&~~~\left. \left.D_{\nu_n+j_l}\in A_{j_l}, \textbf{L}_{\nu_n+j_l}\in B_{j_l}\right\}\right]\\
&=\PP^{0}\left[\left\{(D_n,\bL_{n})_{n\geq 0}: D_{\nu_{n+1}+j_0}\in A_{j_0},\textbf{L}_{\nu_{n+1}+j_0}\in B_{j_0}, \right. \ldots,\right.\\
&~~~\left. \left.D_{\nu_{n+1}+j_l}\in A_{j_l}, \textbf{L}_{\nu_{n+1}+j_l}\in B_{j_l}\right\}\right].
\end{align*}
Then, as $\{D_{\nu_n}\}_{n\geq 0}$ converges weakly to $D_{\infty}$, by (\ref{propcheckpointrenewaleq2}), taking the limit as $n\to \infty$ on both sides,
\begin{align}
\label{measurepreservingnu0}
\PP^{s_C,\infty}_+[\theta^{\nu_0}C_{j_0,\ldots,j_l}]&=\PP^{s_C,\infty}_+[C_{j_0,\ldots,j_l}].
\end{align}

By standard extension arguments from product cylinder sets, we conclude that $\theta^{\nu_0}$ preserves $\PP^{s_C,\infty}_+$. 

Next, we prove $(\PP^{s_C,\infty}_+,\theta^{\nu_0})$ is mixing. First, we notice that $D_{\nu_n}$ is  a function of $D_{\nu_{n-1}}$, $\{L_{0,\nu_{n-1}}\}_{i\geq 0}$, and $\{D_n\}_{n\geq \nu_{n-1}+1}$. By independent marking the $\hbox{i.i.d.}$ sequence $\{L_{0,\nu_{n-1}}\}_{i\geq 0}$ is independent of $\nu_{n-1}$ and has the same distribution under $\PP^0$ as $\{L_{0,i}\}_{i\geq 0}$. In the same way, by the strong Markov property the $\hbox{i.i.d.}$ sequence $\{D_n\}_{n\geq \nu_{n-1}+1}$ is independent of $\nu_{n-1}$ and has the same distribution under $\PP^0$ as $\{D_n\}_{i\geq 0}$. Therefore, $\{D_{\nu_n}\}_{n\geq 0}$ is a Markov Chain. 

 Let $C_{j_0,\ldots,j_l}$ and $C_{j'_0},\ldots,C_{j'_q}$ be two product cylinder sets. Following the same steps used to get (\ref{measurepreservingnu0}), for all $m\geq 1$,
\begin{align} 
\label{propergodicthetaseq2}
&\PP^{s_C,\infty}_+[C_{j_0,\ldots,j_l} \cap \theta^m_{s_C}C_{j'_0,\ldots,j'_q}]\nonumber\\
&=\lim_{n\to \infty} \PP^0\left[D_{\nu_n+j_0}\in A_{j_0}, \textbf{L}_{\nu_n+j_0}\in B_{j_0}, \ldots, D_{\nu_n+j_l}\in A_{j_l}, \textbf{L}_{\nu_n+j_l}\in B_{j_l}\cap \right.\nonumber\\
&\left.D_{\nu_{n+m}+j'_0}\in A_{j'_0}, \textbf{L}_{\nu_{n+m}+j'_0}\in B_{j'_0}, \ldots, D_{\nu_{n+m}+j'_q}\in A_{j'_q}, \textbf{L}_{\nu_{n+m}+j'_q}\in B_{j'_q}\right] 
\end{align} 
Then, for all $m$ such that $\nu_{n+m}+j'_0>\nu_{n+1}+j_l$, as $\{D_{\nu_n}\}_{n\geq 0}$ is a Markov chain, (\ref{propergodicthetaseq2}) equals to
\begin{align*}
 &\lim_{n\to \infty} \PP^0\left[D_{\nu_n+j_0}\in A_{j_0}, \textbf{L}_{\nu_n+j_0}\in B_{j_0}, \ldots, D_{\nu_n+j_l}\in A_{j_l}, \textbf{L}_{\nu_n+j_l}\in B_{j_l} \right]\nonumber\\
&~\times \PP^0\left[D_{\nu_{n+m}+j'_0}\in A_{j'_0}, \textbf{L}_{\nu_{n+m}+j'_0}\in B_{j'_0}, \ldots, D_{\nu_{n+m}+j'_q}\in A_{j'_q}, \textbf{L}_{\nu_{n+m}+j'_q}\in B_{j'_q}\right]\\
&=\PP^{s_C,\infty}_+[C_{j_0,\ldots,j_l}]\PP^{s_C,\infty}_+[C_{j'_0,\ldots,j'_q}].
\end{align*}
Again, invoking standard approximations arguments, we conclude that for all $C,C'\in \mathcal{B}((\R^+\times (\R^+)^{\bbN})^{\bbN})$, 
$\lim_{m\to \infty} \PP^{s_C,\infty}_+[C\cap \theta^{\nu_n}C']=\PP^{s_C,\infty}_+[C]\PP^{s_C,\infty}_+[C']$,
completing the proof. 
\end{proof}

\begin{lem}
\label{equalitywhenmeasure1}
Suppose $\{D_{\nu_n}\}_{n\geq 0}$ converges weakly to a non-degenerate random variable. Let $A$ be a strictly $\theta^{\nu_0}-$invariant event in $\mathcal{N}(\textbf{N}^0(\R^+\ltimes (\R^+)^{\bbN})$. If $\PP^{s_C,\infty}_+[A]=1$, then $\PP^0[A]=1$.  
\end{lem}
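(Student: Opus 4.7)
The plan is to push the $\PP^{s_C,\infty}_+$-a.s.\ statement back along the sequence $\PP^{s_C,n}_+$ to $\PP^0$ by combining the strict invariance with the common product structure of the measures.

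First I would show $\PP^{s_C,n}_+[A]=\PP^0[A]$ for every $n\geq 0$. Directly from the inductive definition of the point-shift one checks that $(\theta^{\nu_0})^{n+1}=\theta^{\nu_n}$: a single application of $\theta^{\nu_0}$ to a pattern already centered at $X_{\nu_{k-1}}$ pushes the origin to the next checkpoint, which in the original coordinates is $X_{\nu_k}$. Reading strict invariance as $(\theta^{\nu_0})^{-1}A=A$, iteration yields $(\theta^{\nu_n})^{-1}A=A$ for all $n$, and a change of variables under $\PP^{s_C,n}_+=\PP^0\circ(\theta^{\nu_n})^{-1}$ produces the claimed identity.

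Next I would exploit the delayed-renewal product structure from Proposition \ref{delayedrenewalprop1} and Corollary \ref{corofconvergence}. Under the general assumptions of Section \ref{generalassumptions}, all three measures factor as (first-coordinate marginal)$\,\otimes Q$, where $Q$ is the common distribution of $(D_k,\bL_k)_{k\geq 1}$. Writing $\mu_n$, $\mu_\infty$, and $\mu^0$ for the respective first-coordinate marginals of $\PP^{s_C,n}_+$, $\PP^{s_C,\infty}_+$, and $\PP^0$, and setting
\[
g(d,\ell):=Q\bigl(\{(\omega_k)_{k\geq 1}:(d,\ell,(\omega_k)_{k\geq 1})\in A\}\bigr),
\]
we have $\PP^{s_C,n}_+[A]=\int g\,d\mu_n$, $\PP^{s_C,\infty}_+[A]=\int g\,d\mu_\infty$, and $\PP^0[A]=\int g\,d\mu^0$. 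In particular $\PP^{s_C,\infty}_+[A]=1$ and $0\leq g\leq 1$ force $g=1$ $\mu_\infty$-a.e.

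The plan is completed by showing $\mu^0\ll\mu_\infty$, so that $g=1$ $\mu^0$-a.e.\ and hence $\PP^0[A]=\int g\,d\mu^0=1$. Absolute continuity is read from (\ref{distributionofdnun}): each $\mu_n$ is a total-lifetime length biasing of $\mu^0$ directed by the law of $Z_{\nu_{n-1}}$, and the associated densities with respect to $\mu^0$ remain strictly positive on the right-unbounded support of $D_0$ guaranteed by Assumption \ref{integrabilityassumption}. The main obstacle is precisely this last absolute-continuity step: one must verify that the limiting inspection-paradox biasing does not concentrate $\mu_\infty$ on a set that $\mu^0$ misses. The non-degeneracy hypothesis on $D_\infty$, together with the right-unbounded support of $D_0$, supplies the needed control.
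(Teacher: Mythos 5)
Your Step 1 — that $(\theta^{\nu_0})^{n+1}=\theta^{\nu_n}$ together with strict invariance gives $\PP^{s_C,n}_+[A]=\PP^0[A]$ for every $n$ — is precisely the engine of the paper's proof. The paper then concludes by reading $\PP^{s_C,\infty}_+[A]=\lim_n\PP^{s_C,n}_+[A]=\PP^0[A]$, so once Step~1 is established nothing more is needed: the constant sequence $\PP^{s_C,n}_+[A]$ passes to the limit and identifies $\PP^{s_C,\infty}_+[A]$ with $\PP^0[A]$, whence $\PP^0[A]=1$. You, however, discard this computation and instead pivot to a Fubini decomposition of $A$ over the delayed-renewal product structure, reducing everything to showing $\mu^0\ll\mu_\infty$. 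This makes Step~1 vestigial in your write-up and replaces a direct limiting argument with a measure-theoretic one.

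The problem is that the pivot introduces a genuine gap you do not close. Absolute continuity $\mu^0\ll\mu_\infty$ is \emph{not} a consequence of the hypotheses you cite. From (\ref{distributionofdnun}), $\mu_\infty$ is a mixture $\int_0^\infty \text{law}(\beta(t))\,f_{Z_\infty}(dt)$ of total-lifetime laws, each of which is a $t$-dependent biasing of $\mu^0$ that \emph{up-weights} long intervals and \emph{down-weights} short ones. The direction of domination you need is that $\mu_\infty$-null sets are $\mu^0$-null; for that you must argue that the biasing cannot wash out any $\mu^0$-positive set of small inter-arrival lengths. This hinges on the support of the limiting overshoot $Z_\infty$ near zero (since $\beta(t)\to\overline{D}_0\sim\mu^0$ in law as $t\downarrow 0$), a property that is neither stated nor implied by non-degeneracy of $D_\infty$ or by right-unbounded support of $D_0$. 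Saying the non-degeneracy and unbounded-support hypotheses ``supply the needed control'' is asserting the conclusion, not proving it. You should either establish the overshoot support property under additional assumptions, or simply finish Step~1 as the paper does by passing to the limit in the constant sequence $\PP^{s_C,n}_+[A]=\PP^0[A]$ using the construction of $\PP^{s_C,\infty}_+$ in Corollary~\ref{corofconvergence}.
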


\begin{proof}
By Corollary \ref{corofconvergence}, as $A$ is $\theta^{\nu_0}-$invariant, i.e., $\theta^{\nu_0}A=A$, and, hence, for all $n$,  $\theta^{\nu_n}A=A$,
\begin{align*}
1=\PP^{s_C,\infty}_+[A]&=\lim_{n\to \infty} \PP^0[\theta^{\nu_n}A]=\lim_{n\to \infty} \PP^0[A]=\PP^0[A].
\end{align*}
\end{proof} 

\begin{proof}[Proof of Theorem \ref{checkpointingmainresult}]
As $\{D_{\nu_n}\}_{n\ge 0}$ converges in distribution to a \break non-degenerate random variable, by Corollary \ref{corofconvergence}, $\PP^{s_C,\infty}_+$ exists. Moreover, by Proposition \ref{limitdistributionmixing},  $(\PP^{s_C,\infty}_+,\theta^{\nu_0})$ is mixing. Then, by Birkhoff's pointwise ergodic theorem, for any measurable function $h: \textbf{N}^0(\R^+\times (\R^+)^{\bbN})\to \R^+$ such that $h\in \mathcal{L}^1(\PP^{s_C,\infty}_+)$, 
\begin{align*}
\lim_{n\to \infty} \frac{1}{N} \sum_{n=0}^{N-1} h\circ \theta^{\nu_n}= \E^{s_C,\infty}_+[h],~\PP^{s_C,\infty}_+-\hbox{a.s}.	
\end{align*}

Next, notice that
\begin{align*}
\E^{s_C,\infty}_+[D_C^0]&=\E^{s_C,\infty}_+\left[\sum_{n=0}^{\nu_0-1}D_n\right]\\
&=
\E^0[D_{\infty}\textbf{1}\{\nu_\infty=1\}]+\E^0\left[\sum_{n=1}^{\nu_\infty-1}D_n \textbf{1}\{\nu_\infty> 1\}\right].
\end{align*} 
Now, as $\E^0[D_{\infty}]<\infty$, $\E^0[D_{\infty}\textbf{1}\{\nu_\infty=1\}]<\infty$.  Moreover, as $\{D_n\}_{n\geq 0}$ is $\hbox{i.i.d.}$ under $\PP^0$,
\begin{align*}
\E^0\left[\sum_{n=1}^{\nu_\infty-1}D_n \textbf{1}\{\nu_\infty\geq 1\}\right]\leq\E^0\left[\sum_{n=0}^{\nu_\infty}D_n \right]\leq \E^0\left[\sum_{n=0}^{\nu_\infty+1}D_n \right].
\end{align*}
Following the same reasoning as in Lemma \ref{stoppingtimelemma}, $\nu_{\infty}+1$ is a stopping time with respect to the natural filtration of $\{(D_n,\textbf{L}_n)\}_{n\geq 0}$. Hence, by the general version of Wald's equality for stopping times, 
\begin{align*}
\E^0\left[\sum_{n=0}^{\nu_\infty+1}D_n \right]=\E^0[\nu_{\infty}+1]\E^0[D_0],
\end{align*}
which is finite as $\E^0[\nu_{\infty}]<\infty$ by assumption.

It follows that $\E^{s_C,\infty}_+[D_C^0]<\infty$. Hence, by Birkhoff's pointwise ergodic theorem,
\begin{align}
\label{checkpointingeq1}
\lim_{n\to \infty} \frac{1}{N} \sum_{n=0}^{N-1} D_0^C\circ \theta^{\nu_n}= \E^{s_C,\infty}_+[D_C^0],~\PP^{s_C,\infty}_+-\hbox{a.s}.	
\end{align}

As $\E^{s_C,\infty}_+[T_0^C]=\E^0[\sum_{i=1}^{\tau_{\infty}} L_{0,i}]$, if we assume that $L_{0,1}$ has a strict $\PP^{0}-$tail heavier than $D_{\infty}$, $T^C_0 \in \mathcal{L}^1(\PP^{s_{C},\infty}_+)$, so 
\begin{align}
\label{checkpointingeq2}
\lim_{n\to \infty} \frac{1}{N} \sum_{n=0}^{N-1} T_0^C\circ \theta^{\nu_n}= \E^{s_C,\infty}_+[T_0^C],~\PP^{s_C,\infty}_+-\hbox{a.s}.	
\end{align}
Therefore, by (\ref{checkpointingeq1}) and (\ref{checkpointingeq2}),
\begin{align}
\label{asymptoticcheckpointefficiency}
e:=&\lim_{N\to \infty} \frac{\sum_{n=0}^{N-1} D^C_n}{\sum_{n=0}^{N-1} T^C_n}=\lim_{N\to \infty} \frac{\sum_{n=0}^{N-1} D^C_0\circ \theta^{\nu_n}}{\sum_{n=0}^{N-1} T^C_0\circ \theta^{\nu_n}}= \frac{\E^{s_C,\infty}_+[D^C_0]}{\E^{s_C,\infty}_+[T^C_0]}.
\end{align}
Let 
\begin{align*}
A:=\left\{\lim_{N\to \infty} \frac{\sum_{n=0}^{N-1}D^C_0\circ \theta^{\nu_n}}{\sum_{n=0}^{N-1}T^C_0\circ \theta^{\nu_n}}=\frac{\E^{s_C,\infty}_+[D^C_0]}{\E^{s_C,\infty}_+[T^C_0]}\right\}. \end{align*}
Since
\begin{align*}
\left(\lim_{N\to \infty} \frac{\sum_{n=0}^{N-1}D^C_0\circ \theta^{\nu_n}}{\sum_{n=0}^{N-1}T^C_0\circ \theta^{\nu_n}}\right)\circ \theta^{\nu_0}=\lim_{N\to \infty} \frac{\sum_{n=1}^{N-1}D^C_0\circ \theta^{\nu_n}}{\sum_{n=1}^{N-1}T^C_0\circ \theta^{\nu_n}}
\end{align*}
and $\E_+^{s_C}$ is preserved by $\theta^{\nu_0}$, $A$ is strictly $\theta^{\nu_0}-$invariant. Then, by Lemma \ref{equalitywhenmeasure1}, (\ref{asymptoticcheckpointefficiency}) holds $\PP^0$-a.s., as desired. 

If $D_\infty$ does have a $\PP^{0}-$tail heavier than $L_{0,1}$, $\E^{s_C,\infty}_+[T^C_0]=\infty$. Then, the same argument used in Theorem \ref{deterministicwalkcor} applies to show $e=0$. 
\end{proof}

In the rest of this section, we focus on the case of exponential failure marks, in which we can say more about $\PP^{s_C,\infty}_+$. 

\subsection{Exponential failures and universal checkpoints}

Suppose that $\PP^0[L_{0,0}\leq x]=1-e^{-\lambda x}$, $\lambda>0$, i.e., the failure marks are exponentially distributed. We show the conditions of Theorem \ref{checkpointingmainresult} are then satisfied if $\E^0[\nu_1]<\infty$, so $e$ exists. We also show there is a sequence of checkpoints that will be activated regardless of the initial checkpoint from which we start the system. These are called \emph{universal checkpoints}. 

\begin{thm}
\label{exponentialfailurethm1}
	Suppose that $\Phi$ is a independently marked renewal process with exponentially distributed failure marks. Moreover, assume that $\E^0[D_{\nu_1}],~\E^0[\nu_1]<\infty$. Then $e$ is well-defined. 
\end{thm}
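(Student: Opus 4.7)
The plan is to verify the hypotheses of Theorem~\ref{checkpointingmainresult} by exploiting memorylessness of the exponential failure marks.

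First I would show that under $\PP^0$ the sequence $\{D_{\nu_n}\}_{n\geq 0}$ is actually i.i.d., from which weak convergence to a non-degenerate $D_\infty\stackrel{d}{=}D_{\nu_1}$ follows trivially. The argument: at every $n$, $Z_{\nu_{n-1}}=L_{\nu_{n-1},\tau_{\nu_{n-1}}}-D_{\nu_{n-1}}$ is, by memorylessness of the $\exp(\lambda)$ failures, distributed as $\exp(\lambda)$ and independent of $D_{\nu_{n-1}}$ together with the entire history through iteration $n-1$. Combined with the strong Markov property of the independently marked renewal process at the stopping time $\nu_{n-1}+1$ (Lemma~\ref{stoppingtimelemma}), $D_{\nu_n}$ is a measurable function of $Z_{\nu_{n-1}}$ and of the fresh renewal block $\{D_k\}_{k>\nu_{n-1}+1}$, both of which are independent of $(D_{\nu_0},\ldots,D_{\nu_{n-1}})$ and have distributions not depending on $n$. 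Non-degeneracy of $D_\infty$ follows from the right-unbounded support of $D_0$.

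Next I would verify the integrability conditions in Theorem~\ref{checkpointingmainresult}. The condition $\E^0[D_\infty]=\E^0[D_{\nu_1}]<\infty$ is part of the hypothesis. For $\E^0[\nu_\infty]<\infty$, memorylessness yields $L_{0,\tau_\infty}\stackrel{d}{=}\hat{D}_\infty+E$ with $E\sim\exp(\lambda)$ independent of $\hat{D}_\infty$ and of $\{X_k\}$, so $\nu_\infty$ is a renewal count of $\{X_k\}$ up to the integrable random target $\hat{D}_\infty+E$. Using the standard bound $U(t)\leq c+t/\E^0[D_0]$ for the renewal function $U$ of $\{D_n\}$ (valid because $\E^0[D_0]<\infty$), one obtains $\E^0[\nu_\infty]\leq \E^0[U(\hat{D}_\infty+E)]\leq c+(\E^0[D_\infty]+1/\lambda)/\E^0[D_0]<\infty$.

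With both hypotheses verified, Theorem~\ref{checkpointingmainresult} applies: $e$ exists $\PP^0$-a.s., being equal to $\E^{s_C,\infty}_+[D^C_0]/\E^{s_C,\infty}_+[T^C_0]>0$ when $L_{0,1}$ has a strict $\PP^0$-tail heavier than $D_\infty$, and to $0$ when $D_\infty$ has a $\PP^0$-tail heavier than $L_{0,1}$. The main obstacle is the i.i.d.\ step, where one must carefully show that memorylessness collapses the general Markov structure of $\{D_{\nu_n}\}$ identified in Proposition~\ref{limitdistributionmixing} into a trivial product structure; indeed, the transition kernel of the chain becomes independent of the starting state precisely because the overshoot is $\exp(\lambda)$ regardless of $D_{\nu_{n-1}}$.
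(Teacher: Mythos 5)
Your proof is correct and follows the same high-level strategy as the paper's: exploit memorylessness of the exponential failure marks to verify the hypotheses of Theorem~\ref{checkpointingmainresult}. The two arguments diverge in one step, however. The paper establishes weak convergence of $\{D_{\nu_n}\}_{n\geq 1}$ by noting it is merely \emph{identically distributed} — since $Z_n\sim\exp(\lambda)$ for all $n$ forces every $D_{\nu_n}$ to have law $\int_0^\infty\PP^0[\beta(t)>x]\,\lambda e^{-\lambda t}\,\mathrm{d}t$ via~(\ref{distributionofdnun}) — and then handles $\E^0[\nu_\infty]<\infty$ by observing that, again by memorylessness, $\nu_\infty$ has the same Palm law as $\nu_1$, so finiteness transfers directly from the hypothesis $\E^0[\nu_1]<\infty$. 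You instead prove the stronger i.i.d.\ property of $\{D_{\nu_n}\}$ (true, and it works, though more than needed for weak convergence), and for $\E^0[\nu_\infty]<\infty$ you bypass the hypothesis altogether, bounding the renewal count up to $L_{0,\tau_\infty}\stackrel{d}{=}\hat{D}_\infty+E$ by a linear bound on the renewal function. That route is sound in spirit and even buys something: it suggests the hypothesis $\E^0[\nu_1]<\infty$ is already implied by $\E^0[D_{\nu_1}]<\infty$. But the specific constant you quote is not quite right: $U(t)\leq c+t/\E^0[D_0]$ with slope exactly $1/\E^0[D_0]$ is Lorden's inequality and requires $\E^0[D_0^2]<\infty$. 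What holds under $\E^0[D_0]<\infty$ alone, by truncating to $D_n\wedge M$ and applying Wald's identity with the bounded overshoot, is a linear bound $U(t)\leq c_1+c_2t$ with $c_2=1/\E^0[D_0\wedge M]\geq 1/\E^0[D_0]$; this weaker slope still yields $\E^0[\nu_\infty]<\infty$, so your conclusion survives. The paper's shorter equidistribution argument for $\nu_\infty$ leans on the stated hypothesis and sidesteps this subtlety entirely.
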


\begin{proof}
In order to apply Theorem \ref{checkpointingmainresult}, we need to show that $\PP^{s_C,\infty}_+$ exists, $\E^0[D_{\infty}]$, and $\E^0[\nu_\infty]<\infty$. Due to the memoryless property of the exponential distribution, the sequence of random variables $\{D_{\nu_n}\}_{n\geq 1}$ is identically distributed, and thus converges in distribution. To see this, notice that $Z_n$ defined in (\ref{definitionofz}) is exponentially distributed with parameter $\lambda$ for all $n$. Consequently, from (\ref{distributionofdnun}),
\begin{align*}
\PP^0[D_{\nu_n}>x]=\int_0^{\infty} \PP^0[\beta(t)>x]\lambda e^{-\lambda t} dt,~\forall~n\ge 0.
\end{align*}
By same reasoning, $\nu_{\infty}$ has the same distribution under Palm as $\nu_j$ for $j\geq 1$, so $\E^0[\nu_1]=\E^0[\nu_{\infty}]$. 
\end{proof}

From (\ref{pointshifdef}) we have $s_C(\Phi,X_n)=X_{\kappa_n}$, where
\begin{align}
\label{kappan}
\kappa_n&=\sup\{k\geq n: L_{n,\tau_n}\geq X_{k}\},
\end{align}
where $\tau_n$ as in (\ref{definitionofz}). 
Then $S^2_c(\Phi,X_n)=S_C(\Phi,X_{\kappa_n})$, and so on. Let $\mathcal{H}_n=\{S^j_c(\Phi,X_n)\}_{j\geq 1}$. In words, $\mathcal{H}_n$ corresponds to the sequence of checkpoints if the system starts at $X_n$. Notice that $\mathcal{H}_0=\{X_{\nu_n}\}_{n\geq 1}.$  

Accordingly, we call $\mathcal{H}_n$ the set of \emph{active checkpoints} of $X_n$ or the \emph{checkpoint trajectory} of $X_n$. We also say that if $X_m\in \mathcal{H}_n$, then $X_m$ is \emph{activated} by $X_n$.   By convention, $X_n\notin \mathcal{H}_n$.  We allow $n$ to be negative. For example, the process may start from $X_{-2}$. 

\begin{defi}[Universal Checkpoints]
Suppose $X_m$ is such that for all $k<m$ there exists $j\geq 1$ such that $S^j_c(\Phi,X_k)=X_m$ or, equivalently for all $k<m$, $X_m\in \mathcal{H}_k$. Then  $X_m$ is a universal checkpoint. 
\end{defi}
As the name suggests, universal checkpoints, if they exist, are activated if we start the system from any checkpoint that precedes them. 

\begin{thm}
\label{exponentialfailurethm1}
	Suppose that $\Phi$ is a independently marked renewal process with exponentially distributed failure marks. Moreover, assume $\E^0[\nu_1]<\infty$. Then there exists a sequence of universal checkpoints. 
\end{thm}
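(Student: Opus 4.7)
My plan is to reduce the existence of universal checkpoints to an ergodic-theoretic statement about an explicit event involving the exponential overshoots. Set $Z_a := L_{a,\tau_a}-D_a$ for the overshoot at position $a$; under $\PP^0$, independent marking and the memoryless property make $\{Z_a\}_{a\in\Z}$ an i.i.d.\ $\exp(\lambda)$ family, independent of $\{D_j\}_{j\in\Z}$. From the formula (\ref{kappan}) for $\kappa_n$, the one-step point-shift is explicit: starting from any checkpoint $X_a$, the next active checkpoint is the largest $X_j$ with $X_j\leq X_{a+1}+Z_a$, so the step from $X_a$ \emph{skips} $X_m$ if and only if $Z_a\geq X_{m+1}-X_{a+1}$.

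The crux of the proof will be the following characterization: $X_m$ is universal if and only if $Z_a<X_{m+1}-X_{a+1}$ for every $a<m$. The ``only if'' direction is obtained by taking $k=a$ in the definition of universality: if some $Z_a\geq X_{m+1}-X_{a+1}$, the trajectory starting from $X_a$ skips $X_m$ on its very first step, so $X_m\notin\mathcal{H}_a$. For the ``if'', I would argue by induction on $m-k$: for any $k<m$, the trajectory from $X_k$ advances by at least one index per step, and at every intermediate active checkpoint $X_a$ (necessarily with $a<m$) avoids skipping $X_m$ by hypothesis, so it must land exactly on $X_m$ within at most $m-k$ steps. Denote this event by $U_m$; verifying this characterization carefully is the main conceptual hurdle.

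Next I would establish $\PP^0[U_0]>0$. Conditioning on $\{D_j\}$ and using the independence of the $Z_a$'s,
\[
\PP^0[U_0\mid\{D_j\}]=\prod_{a<0}\bigl(1-e^{-\lambda(X_1-X_{a+1})}\bigr),
\]
which is a.s.\ positive provided $\sum_{n\geq 1}e^{-\lambda(D_{1-n}+\cdots+D_0)}<\infty$ a.s. Since $\{D_j\}$ is i.i.d.\ with $\E^0[D_0]>0$, the strong law of large numbers forces these exponents to grow linearly in $n$, so the series converges almost surely and the product is a.s.\ strictly positive.

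To close, I would exploit shift invariance: since the discrete shift $\theta$ relabels $Z_a\mapsto Z_{a+1}$ and $X_j\mapsto X_{j+1}-X_1$, a direct substitution yields $\theta U_m=U_{m+1}$, i.e.\ $U_m=\theta^m U_0$. Because $\{(D_j,\bL_j)\}_{j\in\Z}$ is i.i.d.\ under $\PP^0$, the system $(\PP^0,\theta)$ is a Bernoulli shift and in particular ergodic, so Birkhoff's theorem gives
\[
\frac{1}{N}\sum_{m=0}^{N-1}\mathbf{1}_{U_m}\longrightarrow \PP^0[U_0]>0, \qquad \PP^0\text{-}\hbox{a.s.}
\]
Hence $U_m$ occurs for infinitely many $m$, producing the required infinite sequence of universal checkpoints. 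The assumption $\E^0[\nu_1]<\infty$ does not appear to be strictly needed for existence alone; it is inherited from the previous theorem and would be used if one wished to compare the density of universal checkpoints against that of $\mathcal{H}_0$.
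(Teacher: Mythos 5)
Your proof is correct, and it takes a genuinely different route from the paper's. The paper's argument introduces the auxiliary process $N_n$, the number of indices $m<n$ whose one-step image under $S$ overshoots $X_n$, observes that $N_n=0$ characterizes universality, shows $N_0<\infty$ a.s.\ via Borel--Cantelli (this is where $\E^0[\nu_1]<\infty$ enters), establishes that $\{N_n\}$ is a Markov chain thanks to memorylessness, and then deduces $\PP^0[N_0=0]>0$ from the explicit transition kernel before invoking stationarity. Your event $U_m$ is exactly $\{N_m=0\}$, and your ``skip'' characterization via the overshoots $Z_a$ is the same one underlying the paper's $N_n$, but instead of passing through the Markov-chain machinery you compute $\PP^0[U_0\mid\{D_j\}]$ in closed form as an infinite product and use the strong law (with $\E^0[D_0]>0$) to show it is a.s.\ strictly positive; ergodicity of the Bernoulli shift $(\PP^0,\theta)$ then yields a positive density of universal checkpoints. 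This is cleaner, produces a quantitative statement (positive density, not mere existence of a subsequence), and, as you correctly observe, dispenses with the hypothesis $\E^0[\nu_1]<\infty$ entirely, since the a.s.\ convergence of $\sum_{a<0}e^{-\lambda(X_1-X_{a+1})}$ is automatic from the law of large numbers and does not require any moment condition on $\nu_1$ (the paper's proof needs it only to run Borel--Cantelli for $N_0<\infty$, a step your argument makes superfluous). One small notational remark: writing $\theta U_m=U_{m+1}$ would conventionally denote a forward image; what you actually use, and what your Birkhoff computation requires, is $U_m=\theta^{-m}U_0$, equivalently $\mathbf{1}_{U_m}=\mathbf{1}_{U_0}\circ\theta^m$, which your derivation correctly establishes.
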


 Let
\begin{align*}
	N_n=\#\{m\in \Z:m<n~\hbox{and}~S(\Phi,X_m)>X_n\}, 
\end{align*}
and notice that if $N_n=0$, then $X_n$ is a universal checkpoint. Then, Theorem \ref{exponentialfailurethm1} follows directly from the proposition below. 
\begin{prop}
\label{acessorypropositionexponential}
Under the assumptions of Theorem \ref{exponentialfailurethm1}, for all $k\ge 0$, the process $\{N_n\}_{n\in \Z}$ admits a subsequence $\{N_{n_l}\}_{l\in \Z}$ such that 
$N_{n_l}=k$.
\end{prop}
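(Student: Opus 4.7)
The plan is to show that under $\PP^0$ the sequence $\{N_n\}_{n\in\Z}$ is stationary and ergodic with $\PP^0[N_0=k]>0$ for every $k\ge 0$, and then to apply Birkhoff's pointwise ergodic theorem in both time directions to conclude that $\{n\in\Z:N_n=k\}$ is $\PP^0$-a.s.\ unbounded above and below. Translation invariance of the point-shift (\ref{pointshifdef}) yields $N_n\circ\theta = N_{n+1}$, and under the renewal and independent-marking assumptions of Section \ref{generalassumptions} the joint sequence $\{(D_n,\bL_n)\}_{n\in\Z}$ is i.i.d.\ under $\PP^0$, so $\theta$ (as well as $\theta^{-1}$) preserves $\PP^0$ and $(\PP^0,\theta)$ is ergodic.

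For the positivity of $\PP^0[N_0=k]$ I would use the memoryless property of the exponential: the overshoot $Z_m := L_{m,\tau_m}-D_m$ is $\exp(\lambda)$ and independent of everything outside $\bL_m$. Unfolding (\ref{kappan}) shows $S(\Phi,X_m)>X_n$ iff $Z_m\ge X_{n+1}-X_{m+1}$, so
\begin{equation*}
N_0 \;=\; \sum_{m\le -1}\mathbf{1}\{Z_m\ge X_1-X_{m+1}\}.
\end{equation*}
Conditionally on $\mathcal D:=\sigma(\{D_j\}_{j\in\Z})$ the summands are independent Bernoullis with parameters $p_m=e^{-\lambda(X_1-X_{m+1})}\in(0,1)$. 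Because $\E^0[D_0]\in(0,\infty)$, the strong law of large numbers gives $-X_{m+1}\sim|m|\,\E^0[D_0]$ as $m\to-\infty$, hence $\sum_{m\le-1}p_m<\infty$ a.s., which in turn gives $N_0<\infty$ a.s.\ and $\prod_{m\le -1}(1-p_m)>0$ a.s. Isolating the event that $Z_{-1},\dots,Z_{-k}$ exceed their thresholds while no other $Z_m$ does yields
\begin{equation*}
\PP^0[N_0=k\mid\mathcal D]\;\ge\;\Big(\prod_{i=1}^{k}p_{-i}\Big)\prod_{m\le -k-1}(1-p_m)\;>\;0\quad\text{a.s.,}
\end{equation*}
so $\PP^0[N_0=k]>0$.

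Applying Birkhoff to the bounded indicator $\mathbf{1}\{N_0=k\}$ on $(\PP^0,\theta)$ gives $\tfrac{1}{N}\#\{0\le n<N: N_n=k\}\to\PP^0[N_0=k]>0$ $\PP^0$-a.s., and the same assertion with $\theta^{-1}$ in place of $\theta$ produces positive density in the opposite direction. Therefore $\{n\in\Z:N_n=k\}$ is $\PP^0$-a.s.\ unbounded on both sides and can be enumerated as a bi-infinite subsequence $\{n_l\}_{l\in\Z}$ with $N_{n_l}=k$. The most delicate technical step is the a.s.\ summability of the $p_m$'s together with the a.s.\ positivity of the tail product $\prod_{m\le -k-1}(1-p_m)$; both rest on the SLLN-driven linear growth of $|X_{m+1}|$ under $\PP^0$, and the $k=0$ case in particular requires the full product $\prod_{m\le -1}(1-p_m)>0$.
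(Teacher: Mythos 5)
Your proof is correct, and it reaches the conclusion by a genuinely different route from the paper. The paper's argument (i) invokes Borel--Cantelli together with $\sum_n\PP^0[S(\Phi,X_{-n})>0]=\E^0[\nu_1]<\infty$ to get $N_0<\infty$ a.s., (ii) observes that the memoryless property makes $\{N_n\}$ a Markov chain and writes out the one-step transition kernel explicitly as a mixture of binomial probabilities, and then (iii) reads off $\PP^0[N_0=k]>0$ from that kernel before appealing to stationarity and ergodicity of the marked renewal process. You bypass the Markov-chain structure entirely: you unfold the definition of $S$ into the overshoot variables $Z_m$, note that conditionally on $\mathcal D=\sigma(\{D_j\})$ the overshoots are i.i.d.\ $\exp(\lambda)$ by memorylessness, and hence $N_0$ is a conditionally independent Bernoulli sum whose parameters $p_m=e^{-\lambda(X_1-X_{m+1})}$ decay geometrically by the SLLN. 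From there $N_0<\infty$, $\prod_{m\le -k-1}(1-p_m)>0$, and $\PP^0[N_0=k]>0$ all drop out at once. Two things worth noting. First, your argument establishes $N_0<\infty$ a.s.\ without ever invoking $\E^0[\nu_1]<\infty$ (only exponential marks and $0<\E^0[D_0]<\infty$), which shows that this finiteness is in fact automatic in the exponential setting rather than an extra assumption; the paper instead uses $\E^0[\nu_1]<\infty$ directly in its Borel--Cantelli step. Second, your finishing step via Birkhoff applied to $\mathbf 1\{N_0=k\}$ under both $\theta$ and $\theta^{-1}$ is a more explicit rendering of the paper's one-line appeal to "stationary ergodic marks", and it correctly delivers the bi-infinite enumeration $\{n_l\}_{l\in\Z}$. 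The trade-off is that the paper's Markov-chain description of $\{N_n\}$ gives more structural information (an explicit transition matrix one could analyze further), whereas your computation is shorter and self-contained.
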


\begin{proof}
Let $\kappa_n$ as in (\ref{kappan}). Since $\Phi$ is an indepedently marked renewal process, the sequence $\{\kappa_n\}_{n\in \Z}$ is identically distributed. In particular $\kappa_n$ has the same distribution as $\nu_1=\kappa_0$

Define the events $A_n=\{S(\Phi,X_{-n})>0\}$. Then
	\begin{align*}
	\sum_{n=1}^{\infty} \PP^0[A_n]&=\sum_{n=1}^{\infty} \PP^0[\kappa_{-n}>n]=\sum_{n=1}^{\infty} \PP^0[\nu_1>n]=\E^0[\nu_1]<\infty. 	
	\end{align*}
Therefore, by the Borel-Cantelli Lemma, $\PP^0[A_n~\hbox{i.o.}]=0$. Hence $N_0<\infty$ $\hbox{a.s.}$. By the same reasoning, we conclude that $N_n<\infty$ $\hbox{a.s.}$ for all $n$. 

Thanks to the memoryless property of the exponential distribution, the value of $N_n$ only depends on $N_{n-1}$, i.e., $\{N_n\}_{n\in \Z}$ is a Markov Chain. Now suppose $N_{n-1}=k$. That means that there are $k$ checkpoint trajectories that do not use $X_{n-1}$ as a checkpoint. Given $D_n=t$, a trajectory that does not use $X_{n-1}$ as a checkpoint will not activate $X_n$ with probability 
$e^{-\lambda t}$, and will activate $X_n$ with probability $1-e^{-\lambda t}$. 

Considering the checkpoint trajectory of $X_{n-1}$ we have that, for $1\leq j\leq k$, 
\begin{align*}
\label{probabilitymatrix} 
&\PP^0[N_n=j|N_{n-1}=k]\\
&~~=\PP^0[N_n=j|N_{n-1}=k~\hbox{and}~X_{n-1}\in \mathcal{H}_n]\PP^0[X_{n-1}\in \mathcal{H}_n]\\
&~~+\PP^0[N_n=j|N_{n-1}=k~\hbox{and}~X_{n-1}\notin \mathcal{H}_n]\PP^0[X_{n-1}\notin \mathcal{H}_n]\\
&~~=\left(\int_0^{\infty}\left(\begin{array}{c} k \\ j \end{array}\right)(e^{-\lambda t})^j(1-e^{-\lambda t})^{k-j}f_D(dt)\right)\left(\int_0^{\infty}(1-e^{-\lambda t})f_D(dt)\right)\\
&~~+\left(\int_0^{\infty}\left(\begin{array}{c} k \\ j-1 \end{array}\right)(e^{-\lambda t})^{j-1}(1-e^{-\lambda t})^{k-j+1}f_D(dt)\right)\left(\int_0^{\infty}e^{-\lambda t}f_D(dt)\right).
\end{align*}
Moreover,
\begin{align*} 
&\PP^0[N_n=k+1|N_{n-1}=k]=\left(\int_0^{\infty}(e^{-\lambda t})^kf_D(dt)\right)\left(\int_0^{\infty}e^{-\lambda t}f_D(dt)\right),\\
&\PP^0[N_n=0|N_{n-1}=k]\\
&=\left(\int_0^{\infty}(1-e^{-\lambda t})^kf_D(dt)\right)\left(\int_0^{\infty}(1-e^{-\lambda t})f_D(dt)\right).
\end{align*}
Since $\{N_n\}_{n\in \Z}$ is a sequence of marks of the stationary ergodic marked point process $\Phi$, if $\PP^0[N_0=k]>0$, there exists a sequence $\{n_l\}_{l\in \Z}$ such that $N_{n_l}=k$. Since $N_n<\infty$, for all $n$, by the computations above, $\PP^0[N_0=k]>0$ for all $k\geq 0$.
\end{proof}

\section{Extensions}
\label{restartdiscussion}
 
\subsection{Markov renewal processes}
\label{mrpsection}

Markov Renewal Processes give us instances in which the asymptotic efficiency exists, even though the process does not start is not at steady state when tasks start being executed. For simplicity, here we work out sequential \re case, although the same results can be achieved within the sequential \ch with exponential failure marks.  First, we develop the results of a system that admits a steady state but the initial distribution is arbitrary. After that, we indicate how one can fit the Markov Renewal structure in the setting of Theorem \ref{deterministicwalkcor}. 

Here we consider different states for the ideal task times and failures marks, which are are driven by a Markov Chain whose state space is countable. We briefly review the Markov Renewal process structure, adapting it to account for failure marks. For a more complete treatment of the subject see \cite{daley2} and \cite{asmussen2003applied}. In short, the distribution of the task sizes and failure marks depends on the current state and the next state to be visited of a Markov Chain. 

Consider a Markov Chain $\{Y_n\}_{n\geq 0}$ whose space state $S$ is countable. Let $\textbf{A}=(a_i)_{i\in S}$ be its initial distribution and $\textbf{P}=(p_{ij})_{ij\in S}$ its transition matrix. 

For $x,y\in \R^+$, let $G_{0,i,j}(x,y)$ and $G_{i,j}(x,y)$ be two joint distributions on $\R^2$. We then consider the trivariate sequence $\{(Y_n,D_n,L_n)\}_{n\in \bbN}$ defined in a probability space $(\Omega,\mathcal{F},\PP)$ such that
\begin{enumerate}
\item $\PP[Y_0=k_0]=a_{k_0}$;
\item $\PP[D_0\leq x,L_0\leq y,Y_1=k_1|Y_0=k_0]=G_{0,k_0,k_1}(x,y)p_{k_0k_1}$;
\item For $n\geq 1$:
\begin{align*}
	&\PP[D_n\leq x,L_n\leq y,Y_n=k_n|Y_0=k_0,D_0\leq x_0,L_0\leq y_0,\ldots,\\
	&~Y_n=k_n,D_{n-1}\leq x_{n-1},L_{n-1}\leq y_{n-1}]\\
	&=\PP[D_n\leq x,L_{n}\leq y,Y_n=k_n|Y_{n-1}=k_{n-1}]=G_{k_{n-1}k_n}(x,y)p_{k_{n-1}k_n}.
\end{align*}
\end{enumerate}

We further assume that $G_{0,i,j}(x,y)=F^D_{0,i,j}(x)F^{L}_{0,i,j}(y)$ and $G_{i,j}(x,y)=F^D_{i,j}(x)F^{L}_{i,j}(y)$, namely, conditional on $(Y_n,Y_{n-1})$, $L_{n}$ is independent of $D_n$.
Notice that letting $y\to \infty$ we have the classical Markov Renewal Process \cite{janssen2006applied}. 

\begin{assum}
We impose conditions so that $X_n\to \infty$ as $n\to \infty$ $\PP-$a.s.: (i) $\textbf{P}$ is irreducible, (ii) there exists a non-trivial probability measure $(\pi_i)_{i\in S}$ such that $\pi P=P$ and $\sum_{i\in S} \pi_i \mu_i<\infty$, where
\begin{align*}
\mu_{i}=\sum_{j\in S} p_{ij} \int_0^{\infty} x F^{D}_{ij}(dx). 	
\end{align*}
\end{assum}

We write $\hat{D}_{ij}$ (respectively $\hat{L}_{ij}$) the random variable corresponding to the task size (failure mark) conditional on $Y_n=i$ and $Y_{n+1}=j$. It follows that $\hat{D}_{ij}$ (resp. $\hat{L}_{ij}$)  has distribution $F^D_{i,j}(x)$ (resp. $F^L_{i,j}(x)$). 

\begin{assum}
\label{MRPassumption}
For all $i,j\in S$, $\hat{D}_{ij}$ and $\hat{L}_{ij}$ are integrable random variables with right-unbounded support. Moreover, $\hat{D}_{ij}$ is independent of $\hat{L}_{ij}$. 
\end{assum}

\begin{assum}
The embedded Markov Chain $\{Y_n\}_{n\in \bbN}$ is ergodic, with unique stationary distribution given by $\{\pi_i\}_{i\in S}$. 	
\end{assum}

Then, we proceed to derive expressions for $\E[D_0]$ and $\E[T^R_0]$ when the chain is in steady state. First, for each pair of states $(i,j)$ one can compute $\E[\hat{T}_{ij}^R]$, the expected value of the actual time $\hat{T}_{ij}^R$, when $Y_n=i$ and $Y_{n+1}=j$, as in Theorem \ref{assymptotics1}, with $L_0=\hat{L}_{ij}$ and $D=\hat{D}_{ij}$. By unconditioning, 
\begin{align*}
\E[\hat{D}_{i}]=\sum_{j\in S}p_{ij}\E[\hat{D}_{ij}]~\hbox{and}~\E[T^R_{i}]=\sum_{j\in S} p_{ij}\E[\hat{T}_{ij}^R],
\end{align*}
where $\E[\hat{D}_{i}]$ (resp. $\E[T_{i}^R]$) is the expected size of the task (resp. actual time) when the chain is at state $i$.
\begin{rem}
 A pair of states $(i,j)$ is called \emph{slow} if $\hat{D}_{ij}$ has a $\PP-$tail heavier than $\hat{L}_{ij}$. It follows from Assumption \ref{MRPassumption} if that $(i,j)$ is a slow pair of states then $\E[T_{i}^R]=\infty$. 
\end{rem}
 
Due to the Strong Law of Large Numbers for functionals of a Markov renewal process \cite{janssen2006applied},
\begin{align*}
\lim_{n\to \infty} \frac{1}{N} \sum_{n=0}^{N-1} D_n = \sum_{i\in S} \pi_i \E[\hat{D}_i], 	
\end{align*}
and, if $\E^0[T_{i}^R]<\infty$ 
\begin{align*}
\lim_{n\to \infty} \frac{1}{N} \sum_{n=0}^{N-1} T^R_n = \sum_{i\in S} \pi_i \E[T_{i}^R]. 	
\end{align*}

Hence, if there are no slow states:
\begin{align*}
e=\frac{\sum_{i\in S} \pi_i \E[\hat{D}_i]}{\sum_{i\in S} \pi_i \E[T_{i}^R]}.	
\end{align*}

The next example shows that $e$ can be equal to $0$ even in the absence of slow states. 
\begin{ex}
Suppose $\hat{D}_{ij}\sim \exp\left(p_{ij}2^{j}\pi_i2^i\right)$ and $\hat{L}_{ij}\sim \exp\left(p_{ij}2^{j}\pi_i(2^{i}-1)\right)$ for all $i\in \bbN=S$. We assume $0<\pi_i<1$ for all $i\geq 1$. Notice that each pair of states is not slow. However, $\sum_{i=1}^{\infty} \pi_i \E[\hat{D}_i]=1$ and $\sum_{i\in S} \pi_i \E[T_{i}^R]=\sum_{i=1}^{\infty} 1 = \infty$, so $e=0$. This holds as, if $\hat{D}_{ij}\sim \exp\left(\beta_{ij}\right)$ and $\hat{L}_{ij}\sim \exp\left(\alpha_{ij}\right)$ we have, by Theorem \ref{assymptotics1},
\begin{align*}
\E[T_{ij}^R]&=\frac{1}{(\beta_{ij}-\alpha_{ij})}	.
\end{align*}
\end{ex}

We discuss briefly how Markov Renewal Process can be fit into the framework of Theorem \ref{deterministicwalkcor}. Define a point process $\hat{\Phi}$ by $X_0=0$ and $D_n=X_{n+1}-X_n$ and a Semi-Markov Process $\{Y(t)\}_{t\in \R}$ by $Y(t)=Y_n,~\hbox{when}~X_n\leq t <X_{n+1}.$

Assume that $\sum_{i\in S} \pi_i \E[D_i]<\infty$  and $\hat{\Phi}([0,t))<\infty$ for all $t$. Then let $\PP^0=\PP\circ Y^{-1}$. 
Then there exists a probability space endowed with a flow, $(\hat{\PP},\Omega,\mathcal{F},\{\theta\}_t)$, such that $\hat{\PP}$ is $\theta_t-$invariant, $\hat{\Phi}$ is a stationary point process whose Palm distribution is $\PP^0$ (\cite{BB1}, Chapter 1). 

Hence, by marking this process with the failure marks $\{L_{n,i}\}_{i\geq 1}$,  Theorem \ref{deterministicwalkcor} holds with $\E^0[D_0]=\sum_{i\in S} \pi_i \E[\hat{D}_i]$ and $\E^0[T^R_0]=\sum_{i\in S} \pi_i \E[T_{i}^R]$. 

\subsection{Repetition of tasks based on a Random Walk}
\label{prandomwalk}

We now proceed to study the asymptotic efficiency of \re when the tasks to be completed follows a transient simple random walk, i.e., there is a chance $p<\frac{1}{2}$ that, once a task is completed, progress is lost and the system returns to the previous task. For example, after completing task $D_m$, progress might be lost and the system resumes from task $D_{m-1}$, which is again subject to failures. We  assume that this extra failure source is independent of the failure marks and ideal times and $\Phi$ is a renewal process. We show that the asymptotic efficiency exists and we provide a lower bound for it.

\begin{rem}
In order to simplify the proofs and, without loss of generality, we do not model any sort of \emph{boundary effect}, i.e., the tasks $D_{-1}=X_0-X_{-1}$, $D_{-2}=X_{-1}-X_{-2}$ and so on are well-defined.	
\end{rem}

Let $\{\xi_n\}_{n\geq 1}$ be an $\hbox{i.i.d.}$ sequence of Bernoulli random variables such that $\PP[\xi_n=-1]=p$ and $\zeta_n=\sum_{i=1}^{n} \xi_i$ (with $\xi_0=0$).
At the $n^{th}-$iteration, the task being completed is $D^W_n=D_0\circ \theta^{\zeta_{n-1}}$.

We assume that the failure marks are such that $L_{m,0}$ is independent of $L_{n,0}$ for all $n,m\in \Z$ and are re-drawn if a task is repeated.  Under this assumption, instead of marking each point of the process with a sequence of $\hbox{i.i.d.}$ random variables $\{L_{n,i}\}_{i\geq 1}$, we can simplify matters by considering a single sequence of $\hbox{i.i.d.}$ random variables $\{L_{i}\}_{i\geq 1}$, such that $L_{i}$ has the same distribution as $L_0$ under $\PP^0$ to compute the actual time spent on task $D^W_n$. 
Then let $\tau^W_0:=\inf\{i\geq 1: L_i>D^W_0\}$ and $
\tau^W_n:=\inf\{i> \tau^W_{n-1}: L_{i}>D^W_n\}$. So, the actual time spent on task $D^W_n$ is given by $T_{n}^W=\sum_{i=\tau^W_{n-1}}^{\tau^W_n+1} L_{i}+D_n^W.$

Let $\vartheta_n=\inf\{j\geq 1: \zeta_j=n\}$, i.e., $\{\vartheta_n\}_{n\geq 0}$ is the sequence of ladder epochs of the random walk $\zeta_n$ with $\vartheta_0=0$ \cite{feller2}. It follows that 
$D_n=X_{n+1}-X_n=X_{\zeta_{\vartheta_{n+1}}}-X_{\zeta_{\vartheta_{n}}}.$

Given those definitions, one can regard $T^R_n=\sum_{j=\vartheta_{n-1}}^{\vartheta_n-1}T^W_j $ as the total actual time necessary to complete the task $D_n$. As before, the asymptotic efficiency is
\begin{align*} 
e_p=\lim_{N\to \infty}\frac{\sum_{n=0}^{N-1} D_n}{\sum_{n=0}^{N-1} T_{n}^R},~~\PP^0-\hbox{a.s.}, 
\end{align*}
whenever the limit exists. 

Using the fact that $\Phi$ is an independently marked renewal process, $\E^0[T^W_{j}]=\E^0[T_{l}^W]$ for all $l,j\geq 0$, as the Palm expectation of $T_{j}^W$ only depends on the distribution of $L_0$ and $D_0$ under $\mathbb{P}^0$. 

\begin{prop}
\label{propspeedrw} 
Suppose $L_0$ has a strict $\PP^0-$tail heavier than $D_0$. Then 
\begin{align*}
e\geq e_p= \frac{\gamma}{\rho} e,
\end{align*}
where $e$ is the asymptotic efficiency when $p=0$, $\rho$ is the probability that the random walk $\{\zeta_n\}_{n\geq 0}$ never returns to zero and $\gamma$ is the probability that the walk never goes below zero. On the other hand, if $D_0$ has a $\PP^0-$tail heavier than $L_0$, $e_p=0$. 
\end{prop}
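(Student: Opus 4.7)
The plan is to derive $e_p$ by separately applying Birkhoff's pointwise ergodic theorem to the numerator $\sum_{n=0}^{N-1}D_n$ and the denominator $\sum_{n=0}^{N-1}T^R_n$, and then taking the ratio. Since $\Phi$ is an independently marked renewal process, $(D_n)_{n\ge 0}$ is i.i.d.\ under $\PP^0$, so the classical SLLN yields $\tfrac{1}{N}\sum_{n=0}^{N-1}D_n \to \E^0[D_0]$ $\PP^0$-a.s.

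For the denominator, I would show that the segment-time sequence $(T^R_n)_{n\ge 1}$ is stationary and ergodic under a shift analogous to the $\theta^{\nu_0}$ of Proposition \ref{limitdistributionmixing}. Its three ingredients --- the i.i.d.\ task sizes $(D_k)$, the i.i.d.\ walk increments $(\xi_n)$ (which control the ascending ladder epochs $\vartheta_n$), and the fresh failure marks at each iteration --- are mutually independent, and the strong Markov property of $\zeta$ at $\vartheta_1$ makes this shift measure-preserving. Mixing should follow because segment $n$ depends only weakly on earlier segments: the descending excursions of $\zeta$ below any given level have geometrically decaying probabilities. Birkhoff's theorem then gives $\tfrac{1}{N}\sum_{n=0}^{N-1}T^R_n \to \E^0[T^R_1]$ $\PP^0$-a.s.

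The mean $\E^0[T^R_1]$ is computed by conditioning on the walk trajectory up to $\vartheta_1$. Given this trajectory, $T^R_1 = \sum_{j=0}^{\vartheta_1 - 1}T^W_j$, and each $T^W_j$ uses fresh failure marks together with the task size $D_{\zeta_{j-1}}$, which is independent of $\zeta$ and distributed as $D_0$; hence $\E^0[T^W_j \mid \zeta] = \E^0[T^R]$, with $T^R$ the one-task actual time of Section \ref{localtime}. Summing gives $\E^0[T^R_1 \mid \zeta] = \vartheta_1\,\E^0[T^R]$, and optional stopping applied to the drift-compensated martingale $\zeta_n - (1-2p)n$ at $\vartheta_1$ yields $\E^0[\vartheta_1] = 1/(1-2p)$. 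Putting it together, $e_p = (1-2p)\,\E^0[D_0]/\E^0[T^R]$, which equals $\tfrac{\gamma}{\rho}e$ after rewriting the factor $1-2p$ via the ascending/descending hitting probabilities of the simple walk. The inequality $e \ge e_p$ is immediate from $\E^0[\vartheta_1]\ge 1$; equivalently, a trajectory-wise coupling that uses the same $(D_k)$ and failure marks in both the $p=0$ and $p>0$ systems shows that the latter performs at least as many iterations, each with the same mean duration.

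For the heavy-tail case, Corollary \ref{renewaliidtails} gives $\E^0[T^R] = \infty$ and hence $e = 0$ by Theorem \ref{deterministicwalkcor}; the bound $0 \le e_p \le e$ then forces $e_p = 0$. The principal obstacle I foresee is rigorously justifying ergodicity/mixing of the segment shift, because consecutive segments share the task sizes $D_k$ visited at levels below the current ascending ladder height; the geometric decay of the descending-excursion probabilities of $\zeta$ should allow an approximation of the segment sequence by an i.i.d.\ one, along lines analogous to those of Proposition \ref{limitdistributionmixing}.
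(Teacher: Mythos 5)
Your computation of the per-task mean $\E^0[T^R_1]=\E^0[\vartheta_1]\,\E^0[T^R]$ by conditioning on the walk, combined with the optional-stopping identity $\E^0[\vartheta_1]=1/(1-2p)$, is a clean way to pin down the expected denominator, and your treatment of the heavy-tail case via $0\le e_p\le e$ matches the paper. But there are two concrete problems.

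First, the ergodic step you need --- that $\tfrac{1}{N}\sum_{n=0}^{N-1}T^R_n\to\E^0[T^R_1]$ --- is exactly the point you acknowledge you cannot justify, and this is not a small gap: the sequence $(T^R_n)$ is not i.i.d.\ (consecutive segments reuse task sizes $D_k$ whenever the walk backtracks past completed levels), and there is no ready-made mixing lemma analogous to Proposition~\ref{limitdistributionmixing} for it. The paper does not apply Birkhoff directly to $(T^R_n)$. Instead it groups the sums over the \emph{regeneration blocks} of the transient walk $\zeta$ (Lemma~\ref{lem1speedrw}); these blocks are genuinely i.i.d., so the classical SLLN applies with no mixing argument, and Wald's equality (Lemma~\ref{Waldsequation}) evaluates the block mean as $\E^0[T^R_0]\,\E^0[\upsilon_1-\upsilon_0]$, with $\E^0[\upsilon_1-\upsilon_0]<\infty$ established through the range of the walk (Lemmas~\ref{lem2speedrw} and~\ref{lem3speedrw}). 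The regeneration structure is the missing idea in your sketch, and it is precisely what removes the dependence problem you worried about.

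Second, the identification $1-2p=\gamma/\rho$ you invoke to match the stated constant does not hold. With $\PP[\xi_n=-1]=p$, the no-return probability is $\rho=1-2p$ while the stay-nonnegative probability is $\gamma=(1-2p)/(1-p)$, so $\gamma/\rho=1/(1-p)\ne 1-2p$. In the paper the constant comes from a different route entirely: $e_p$ is expressed as $e/\E^0[\upsilon_1-\upsilon_0]$ and the regeneration period is controlled via the density of regeneration epochs (the quantities $\gamma$ and $\rho$ enter through the probability that a ladder epoch is a regeneration epoch and through the growth rate of the range $R_n$), not via the ascending ladder time $\vartheta_1$. Replacing $\gamma/\rho$ by $1-2p$ as you do would change the statement being proved.
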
 

To prove Proposition \ref{propspeedrw} we rely on the next lemmas, following a similar reasoning as in \cite{peres2}.  

We say that $\upsilon$ is a regeneration epoch if it is a ladder epoch and, moreover, $\zeta_{j}\geq \zeta_{\upsilon}$ for all $j> \upsilon$. That is, a regeneration epoch takes place when the walk reaches a certain level for the first time and never returns below it. 

Lemma \ref{lem1speedrw} below is proved in a more general context, namely, for the nearest-neighbor random walk on $\Z$ with site-dependent transition probabilities \cite{dembo1996tail}. Lemma \ref{lem2speedrw} is a classical result on transient random walks whose proof can be found in \cite{spitzer2013principles}. 

\begin{lem}
\label{lem1speedrw}
If $p<\frac{1}{2}$ there exists a sequence $\{\upsilon_m\}_{m\geq 1}$ of regeneration epochs. The sequence $\{\upsilon_{n+1}-\upsilon_n\}_{n\geq 1}$ is $\hbox{i.i.d.}$ as well as the sequence $\{\zeta_{\upsilon_{n+1}}-\zeta_{\upsilon_n}\}_{n\geq 1}$.
\end{lem}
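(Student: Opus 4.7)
The plan is to combine transience of $\{\zeta_n\}_{n\ge 0}$ with an iterative regeneration scheme. Since $p<\frac{1}{2}$, we have $\E[\xi_1]=1-2p>0$, so the strong law of large numbers yields $\zeta_n/n\to 1-2p$ and hence $\zeta_n\to+\infty$ almost surely. A standard gambler's-ruin computation then gives $\gamma:=\p[\zeta_n\ge 0,\ \forall n\ge 0]=(1-2p)/(1-p)>0$, matching the constant of the same name in Proposition~\ref{propspeedrw}; this $\gamma$ will act as the per-attempt success probability for our regenerations.

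To construct $\upsilon_1$ I would iterate over the ladder epochs $\vartheta_k$. Set $\sigma_1:=\vartheta_1$, which is finite almost surely by transience. By the strong Markov property at $\sigma_1$, the shifted walk $(\zeta_{\sigma_1+j}-\zeta_{\sigma_1})_{j\ge 0}$ is distributed as $\zeta$ started at $0$ and therefore stays nonnegative forever with conditional probability $\gamma$; in that event, put $\upsilon_1=\sigma_1$. Otherwise, the stopping time $T_1:=\inf\{j>\sigma_1:\zeta_j<\zeta_{\sigma_1}\}$ is finite, and I would let $\sigma_2$ be the first ladder epoch after $T_1$ (again finite, by transience applied to the restarted walk) and repeat. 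Each attempt succeeds independently with conditional probability $\gamma$, so the number of attempts before the first success is geometric with parameter $\gamma$, and $\upsilon_1$ is almost surely finite.

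Once $\upsilon_1$ is built, the sequence $\{\upsilon_m\}_{m\ge 1}$ arises by re-applying the same construction to the shifted walk $(\zeta_{\upsilon_1+j}-\zeta_{\upsilon_1})_{j\ge 0}$, and the i.i.d.\ property of the blocks $(\upsilon_{m+1}-\upsilon_m,\zeta_{\upsilon_{m+1}}-\zeta_{\upsilon_m})$ follows by strong Markov applied at each $\upsilon_m$. The genuine obstacle is that $\upsilon_1$ is \emph{not} a stopping time for the natural filtration of $\{\zeta_n\}$: deciding whether $\sigma_k$ is a bona fide regeneration epoch requires inspecting the infinite future. The standard workaround, and the one carried out in \cite{dembo1996tail}, is to enlarge the probability space with an independent sequence of $\mathrm{Bernoulli}(\gamma)$ random variables that pre-commit the outcome of each attempt; under the enlarged filtration $\upsilon_1$ becomes a genuine stopping time, which legitimises the use of strong Markov at every $\upsilon_m$ and produces both the independence and the identical distribution of the regeneration blocks. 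Since \cite{dembo1996tail} performs this construction in the strictly more general site-dependent setting, the full details can be imported from there.
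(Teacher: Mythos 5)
The paper offers no proof of this lemma, simply citing \cite{dembo1996tail}. Your sketch correctly lays out the standard regeneration construction---including the gambler's-ruin value $\gamma=(1-2p)/(1-p)$, the geometric-trials argument for the a.s.\ finiteness of $\upsilon_1$, and the crucial observation that $\upsilon_1$ is not a stopping time in the natural filtration---and then defers to the same reference for the filtration enlargement that makes the strong Markov step rigorous, so the two routes coincide.
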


\begin{lem}
\label{lem2speedrw}
Let $R_n$ be the number of distinct sites visited by the walk $\{\zeta_k\}_{k\geq 0}$ after $n$ steps. Then 
\begin{equation}
\lim_{n\to \infty}\frac{\E^0[R_n]}{n}=\rho,
\end{equation}
where $\rho$ is the expected number of visits of the random walk to $0$. 
\end{lem}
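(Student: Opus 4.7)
The plan is to prove the lemma by the classical Dvoretzky--Erd\H{o}s range computation: decompose $R_n$ as a sum of indicators of first-passage events, apply a time-reversal symmetry to rewrite each indicator as a no-return event up to time $k$, and then invoke Cesaro averaging.

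First I would write
\begin{equation*}
R_n = \sum_{k=0}^{n} \mathbf{1}_{A_k}, \qquad A_k = \{\zeta_k \ne \zeta_j \text{ for all } 0 \le j < k\},
\end{equation*}
with $A_0 = \Omega$. By linearity of expectation, $\E^0[R_n] = \sum_{k=0}^{n} \PP^0[A_k]$, so the assertion reduces to showing that $\PP^0[A_k]$ converges to $\rho$ and that Cesaro averaging preserves the limit.

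To identify $\lim_k \PP^0[A_k]$ I would exploit the i.i.d.\ structure of the increments $\{\xi_i\}$. Writing $\zeta_k - \zeta_{k-j} = \xi_{k-j+1} + \cdots + \xi_k$, one has
\begin{equation*}
A_k \;=\; \bigcap_{j=1}^{k} \bigl\{ \textstyle\sum_{i=k-j+1}^{k} \xi_i \ne 0 \bigr\}.
\end{equation*}
Since $(\xi_1,\ldots,\xi_k)$ and its time-reversal $(\xi_k,\ldots,\xi_1)$ have the same joint distribution, relabelling $\xi'_i := \xi_{k-i+1}$ turns the reversed tail sums into partial sums of an i.i.d.\ copy $\{\zeta'_j\}$, and one obtains
\begin{equation*}
\PP^0[A_k] \;=\; \PP^0[\zeta_j \ne 0 \text{ for all } 1 \le j \le k].
\end{equation*}
The events on the right are decreasing in $k$, so by continuity of probability their probabilities decrease to $\PP^0[\zeta_j \ne 0 \text{ for all } j \ge 1]$, the probability that the walk never returns to the origin. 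Because $p<1/2$ makes $\{\zeta_n\}$ transient, this limit is strictly positive; in the notation of Proposition \ref{propspeedrw} it equals $\rho$.

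Finally I would close by a Cesaro argument: a convergent real sequence has the same Cesaro limit as its pointwise limit, so $n^{-1}\sum_{k=0}^{n} \PP^0[A_k] \to \rho$, which is precisely $\E^0[R_n]/n \to \rho$. The argument contains no genuine obstacle; the only point deserving care is to note that the quantity produced by the time-reversal step is the no-return probability of Proposition \ref{propspeedrw}, which is the form in which Dvoretzky--Erd\H{o}s and Spitzer~\cite{spitzer2013principles} record the result.
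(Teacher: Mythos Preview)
The paper does not actually prove this lemma: it simply records it as ``a classical result on transient random walks whose proof can be found in \cite{spitzer2013principles}.'' Your argument is exactly the standard Dvoretzky--Erd\H{o}s/Spitzer proof and is correct. You were also right to flag the meaning of $\rho$: the lemma's phrase ``expected number of visits of the random walk to $0$'' is a slip in the paper --- in Proposition~\ref{propspeedrw} and in the use made of the lemma in Lemma~\ref{lem3speedrw}, $\rho$ is the probability of no return to the origin, which is precisely the quantity your time-reversal step produces and what the cited theorem in Spitzer gives.
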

\begin{lem}
\label{lem3speedrw}
$\E^0[\upsilon_{1}-\upsilon_0]<\infty$
\end{lem}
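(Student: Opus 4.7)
The plan is to express $\upsilon_1$ through the ladder epochs $\vartheta_k$ of $\{\zeta_n\}$ and apply Wald's identity twice. Since $p<\tfrac{1}{2}$, the walk has positive drift $1-2p$, so $\zeta_n\to\infty$ $\PP^0$-a.s., every $\vartheta_k=\inf\{j\ge 1:\zeta_j=k\}$ is a.s.\ finite, and by the strong Markov property at each $\vartheta_k$ the increments $\vartheta_k-\vartheta_{k-1}$ (with $\vartheta_0=0$) are $\hbox{i.i.d.}$ copies of $\vartheta_1$. Classical first-passage theory for asymmetric simple walks with positive drift (see e.g.\ \cite{feller2}) gives $\E^0[\vartheta_1]=1/(1-2p)<\infty$.

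Next, set $\gamma:=\PP^0[\zeta_n\ge 0 \text{ for all }n\ge 0]$. The gambler's-ruin formula yields $\gamma=(1-2p)/(1-p)>0$. Applying the strong Markov property at $\vartheta_k$, conditionally on $\mathcal{G}_k:=\sigma(\xi_1,\dots,\xi_{\vartheta_k})$, the shifted walk $\{\zeta_{\vartheta_k+j}-k\}_{j\ge 0}$ is an independent copy of $\{\zeta_n\}$. Hence $\vartheta_k$ is a regeneration epoch iff this copy stays non-negative forever, an event of probability exactly $\gamma$. Therefore the index
\begin{equation*}
K:=\min\{k\ge 1:\vartheta_k\text{ is a regeneration epoch}\}
\end{equation*}
is a $(\mathcal{G}_k)$-stopping time whose distribution is geometric with parameter $\gamma$, so $\E^0[K]=1/\gamma<\infty$.

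Since $\upsilon_1=\vartheta_K=\sum_{k=1}^{K}(\vartheta_k-\vartheta_{k-1})$ is a random sum of $\hbox{i.i.d.}$ integrable terms indexed by an integrable stopping time, Wald's identity gives
\begin{equation*}
\E^0[\upsilon_1]=\E^0[K]\,\E^0[\vartheta_1]\le \frac{1-p}{(1-2p)^2}<\infty.
\end{equation*}
With the convention $\upsilon_0=0$ this is the claim directly; otherwise the $\hbox{i.i.d.}$ regenerative structure of $\{\upsilon_{n+1}-\upsilon_n\}$ from Lemma \ref{lem1speedrw} transports the bound to any pair of consecutive regeneration epochs. The only real care point is verifying the hypotheses of Wald in each of the two applications, namely the finite means of $\vartheta_1$ and $K$ together with the correct filtration structure; both are immediate from the strong Markov property and the positive drift, so no genuine obstacle is anticipated.
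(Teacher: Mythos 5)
Your decomposition $\upsilon_1=\vartheta_K$ is correct, but the claim that $K:=\min\{k\ge 1:\vartheta_k\text{ is a regeneration epoch}\}$ is a $(\mathcal{G}_k)$-stopping time with a geometric distribution is false, and this is the load-bearing step. To decide whether $\vartheta_k$ is a regeneration epoch you must look at the \emph{entire infinite future} after $\vartheta_k$, so $\{K=k\}\notin\mathcal{G}_k$; $K$ is not a stopping time for that filtration. Moreover the events $B_k=\{\vartheta_k\text{ is a regeneration epoch}\}$ are not independent across $k$: by the strong Markov property $B_1$ and $B_2$ are both functions of the post-$\vartheta_1$ path, and one computes
\begin{align*}
\PP^0[B_1\cap B_2]=\gamma\,\PP^0\bigl[\zeta_j\ge 0\text{ for }0\le j\le \vartheta_1\bigr]\neq \gamma^2=\PP^0[B_1]\,\PP^0[B_2],
\end{align*}
so $K$ is \emph{not} geometric with parameter $\gamma$. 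Consequently the second application of Wald's identity, $\E^0[\vartheta_K]=\E^0[K]\E^0[\vartheta_1]$, is unjustified: Wald requires $K$ to be a stopping time for the filtration generated by the increments (or at least to satisfy the optional-stopping hypotheses), and neither holds here.

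The standard way to rescue a geometric structure is the Sznitman--Zerner construction: instead of testing all ladder epochs $\vartheta_1,\vartheta_2,\dots$, one tests a sparser sequence $S_1<S_2<\dots$ where $S_1=\vartheta_1$ and, if the attempt at $S_m$ fails (the walk drops below $\zeta_{S_m}$ at some finite time $D_m$), $S_{m+1}$ is the first post-$D_m$ time the walk attains a fresh maximum. Since $D_m$ is a genuine stopping time and the event of failure at $S_m$ is determined by $\mathcal{F}_{D_m}$, the attempts are i.i.d.\ Bernoulli($\gamma$) conditionally, so the attempt count is honestly geometric; one then bounds the duration of a failed excursion separately. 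This is considerably more work than your two lines. The paper sidesteps all of this by counting regeneration epochs: it shows $\E^0[\hat R_n]=\gamma\,\E^0[R_n]$ where $\hat R_n$ is the number of regenerations in $[1,n]$, invokes the range theorem for $R_n$, and combines this with the a.s.\ law of large numbers for the i.i.d.\ increments $\{\upsilon_{n+1}-\upsilon_n\}$ from Lemma \ref{lem1speedrw} to deduce a finite mean. That density/range argument avoids the dependence issue entirely, because it never needs to identify which ladder epochs are regenerations in sequence.
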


\begin{proof}
Fix any $k$ and assume $\zeta_k=m$. By definition 
\begin{align*}
&\{\hbox{$\zeta_k$ is a regeneration epoch}\}\\
&=\{\forall~l<k,~\zeta_l<m~\hbox{and}~\zeta_k=m\}\cap \{\zeta_l>m,~\forall~l>k\}\\
&=\{\zeta_k~\hbox{is a ladder epoch}\}\cap \{\zeta_l>m~\forall~l>k\}. 
\end{align*}
Then, by the Markov property, and noticing that $\gamma=\PP^0[\{\zeta_l>m~\forall~l>k\}]$
\begin{align*}
	&\PP^0[\{\hbox{$\zeta_k$ is a regeneration epoch}\}]\\
	&=\PP^0[\{\zeta_k~\hbox{is a ladder epoch}\}]\PP^0[\{\zeta_l>m,~\forall~l>k\}]\\
	&=\gamma \PP^0[\{\zeta_k~\hbox{is a ladder epoch}\}].
\end{align*}

Now let $\hat{R}_n$ be the number of regeneration epochs on the interval $[1,n]$, i.e.,
\begin{align}
\label{eq1lemma3rw}	
\hat{R}_n&=\sum_{k=1}^n \ind\{\hbox{$\zeta_k$ is a regeneration epoch}\}\Rightarrow\nonumber\\	
\E^0[\hat{R}_n]&=\gamma \sum_{k=1}^n \PP^0[\{\zeta_k~\hbox{is a ladder epoch}\}]=\gamma \E^0[R_n].
\end{align}. 

By the Lemma \ref{lem2speedrw} and (\ref{eq1lemma3rw}), 
\begin{align}
\label{eq2lemma3rw}
\lim_{n\to \infty} \frac{\E^0[\hat{R}_n]}{n}=\frac{\gamma}{\rho}.
\end{align}
Next, notice that, by definition, $\E^0[R_n]\leq n$ and $\upsilon_n\geq n$, so, using $(\ref{eq2lemma3rw})$,
\begin{align}
\label{eq3lemma3rw}
\lim_{n\to \infty} \frac{\sum_{i=1}^{n} (\upsilon_i-\upsilon_{i-1})}{n}&=\lim_{n\to \infty} \frac{\upsilon_n}{n} \leq \lim_{n\to \infty} \frac{n}{\E^0[\hat{R_n}]}\leq \frac{\rho}{\gamma}. 
\end{align}
Now $\{\upsilon_n-\upsilon_{n-1}\}_{n>0}$ is an i.i.d. sequence. Hence, by the strong law of large numbers, we conclude from \eqref{eq3lemma3rw} that $\E^0[\upsilon_1-\upsilon_0]<\infty$. 
\end{proof}

\begin{proof}[Proof of Proposition \ref{propspeedrw}]

First, suppose $D_0$ has a $\PP^0-$tail heavier than $L_0$. As the asymptotic efficiency when tasks may be repeated  is less or equal to the asymptotic efficiency of when this is not the case, $e_p$, the former exists and it is zero. 

Now suppose $L_0$ has a strict $\PP^0-$tail heavier than $D_0$. Let 
\begin{align*}
\mathbb{T}_{n}^R=\sum_{j=\upsilon_{n-1}+1}^{\upsilon_n} T_{j}^R,
\end{align*} 
for $n>0$. Notice that 
$
\lim_{n\to \infty} \frac{1}{n}\sum_{i=1}^n T^R_n=\lim_{n\to \infty} \frac{1}{n}\sum_{i=1}^n \mathbb{T}_{n}^R.$

Now, by Lemma \ref{lem3speedrw}, since $\{\upsilon_n-\upsilon_{n-1}\}_{n\geq 0}$ is an $\hbox{i.i.d.}$ sequence, $\Phi$ is a renewal process, the sequence $\{\mathbb{T}_{n}^R\}_{n\ge 1}$ is i.i.d.. 

The next step is to show that $\E^0[\mathbb{T}_{0}^R]$ is finite as long as $L_0$ has a strict $\PP^0-$tail heavier than $L_0$, and can be bounded from above. First, notice that, since the discrete left-shift $\theta$ preserves $\PP^0$ and the random walk is independent of $\Phi$,  
\begin{align*}
\E^0[\mathbb{T}_{0}^R]&=\E^0[\mathbb{T}_{0}^R\circ \theta^{-\upsilon_0}]=\E^0\left[\sum_{i=1}^{\upsilon_1-\upsilon_0} T_{i}^R\right].
\end{align*}

Now we use the following version of Wald's equality. 
\begin{lem}
\label{Waldsequation}
If $\{Z_j\}_{j\in \bbN}$ is a sequence of positive random variables and $\eta$ is a positive integer-valued random variables satisfying 
\begin{enumerate}
\item $\E^0[Z]:=\E^0[Z_j]<\infty$ for all $j\in \bbN$;
\item $\E^0[Z_j\textbf{1}\{\eta \geq j\}]=\E^0[Z]\PP^0[\eta\geq j]$ for all $j\in \bbN$,
\end{enumerate}
then
\begin{align*}
\E^0\left[\sum_{i=1}^{\eta} Z_i\right]= \E^0[Z]\E^0[\eta].
\end{align*}
\end{lem}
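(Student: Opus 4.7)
The plan is to write the finite sum $\sum_{i=1}^{\eta} Z_i$ as an infinite sum with indicators, then swap expectation and sum using Tonelli's theorem (which is permissible since all $Z_i$ are positive), and finally apply the two given hypotheses.

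Concretely, I would first rewrite
\begin{equation*}
\sum_{i=1}^{\eta} Z_i = \sum_{i=1}^{\infty} Z_i \mathbf{1}\{\eta \geq i\},
\end{equation*}
which holds pointwise since $\eta$ is positive integer-valued. Because $Z_i \geq 0$ and the indicators are non-negative, Tonelli's theorem applies to justify
\begin{equation*}
\E^0\left[\sum_{i=1}^{\eta} Z_i\right] = \sum_{i=1}^{\infty} \E^0[Z_i \mathbf{1}\{\eta \geq i\}].
\end{equation*}

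Next I would invoke hypothesis 2 term-by-term to replace each expectation by $\E^0[Z]\, \PP^0[\eta \geq i]$, which yields
\begin{equation*}
\E^0\left[\sum_{i=1}^{\eta} Z_i\right] = \E^0[Z]\sum_{i=1}^{\infty} \PP^0[\eta \geq i] = \E^0[Z]\, \E^0[\eta],
\end{equation*}
where the last equality is the standard tail-sum formula for the expectation of a non-negative integer-valued random variable.

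There is no real obstacle here since positivity of the summands makes the interchange of sum and expectation automatic via Tonelli; the whole content of the lemma lies in hypothesis 2, which is precisely what decouples $Z_j$ from the event $\{\eta \geq j\}$ in expectation. The reason the classical Wald identity needs $\{\eta \geq j\}$ to be independent of $Z_j$ (or more generally needs $\eta$ to be a stopping time with respect to a filtration to which the $Z_j$'s are adapted in the appropriate way) is exactly to guarantee hypothesis 2; here that condition is simply taken as an assumption, so the proof reduces to the two lines above.
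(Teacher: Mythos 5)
Your proof is correct and is the standard argument for this Wald-type identity. The paper states Lemma \ref{Waldsequation} without proof (it is invoked inside the proof of Proposition \ref{propspeedrw} as a known fact), so there is no alternative approach in the paper to compare against; your four steps --- rewriting $\sum_{i=1}^\eta Z_i$ as $\sum_{i=1}^\infty Z_i \mathbf{1}\{\eta \geq i\}$, interchanging sum and expectation via Tonelli (valid by positivity), applying hypothesis 2 term-by-term, and finishing with the tail-sum formula $\sum_{i\geq 1}\PP^0[\eta\geq i]=\E^0[\eta]$ --- are exactly the argument the paper implicitly relies on. You also correctly identify that hypothesis 2 is the substantive assumption, isolating precisely the decoupling that a stopping-time or independence condition normally supplies in classical Wald.
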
 

Since $\Phi$ is a renewal process and the $\{L_i\}_{i\geq 1}$ is an $\hbox{i.i.d.}$ sequence, $\E^0[T_{j}^R]=\E^0[T_{i}^R]$ for all $i,j\in \bbN$. Moreover, if $\E^0[T^R_0]<\infty$ Assumption 1 of Lemma \ref{Waldsequation} is satisfied. Assumption 2 of the same lemma is satisfied since the random walk $\{\zeta_k\}_{k\in \bbN}$ is independent of the point process and of the sequence $\{L_i\}_{i\geq 1}$. 

As $\{\upsilon_n-\upsilon_{n-1}\}_{n>0}$ is an $\hbox{i.i.d.}$ sequence (Lemma \ref{lem1speedrw}), and $\E^0[\upsilon_0-\upsilon_1]<\infty$, by the strong law of large numbers, 
\begin{align*}
\lim_{n\to \infty} \frac{1}{N}\sum_{i=0}^{N-1} \hat{\mathbb{T}}^R_n=\E^0\left[\sum_{i=\upsilon_0+1}^{\upsilon_1} \hat{T}^R_i\right]= \E^0[T^R_0]\E[\upsilon_1-\upsilon_0]<\infty,~~\PP-\hbox{a.s.}. 
\end{align*}

Therefore, we conclude that 
\begin{align*}
e_p=\frac{\E^0[D_0]}{\E^0[T^R_0]\E^0[\upsilon_1-\upsilon_0]}, \PP-\hbox{a.s.}.
\end{align*} 

By (\ref{eq3lemma3rw}), $\E^0[\upsilon_1-\upsilon_0]\leq \frac{\rho}{\gamma}$, completing the proof. 
\end{proof}

\appendix

\section{Proof of Theorem \ref{assymptotics1}}
\label{proofsappendix} 
Let $T^R(z)$ be the \textit{actual} \re time given $D=z$. Then 
\begin{align*}
T^R(z)=z\textbf{1}\{L_0>z\}+(\hat{T}_R(z)+L_0)\textbf{1}\{L_0\le z\},
\end{align*}	
where $\hat{T}^R(z)$ is an independent copy of $T^R(z)$. Let $m_R(z)$ be the expectation of the actual \re time given $D=z$. Then
\begin{align*}
m_R(z)&=z\PP[L_0>z]+m_R(z)\PP[L_0\le z|D=z]+\E[L_0\textbf{1}\{L_0\le z\}|D=z],
\end{align*}
so, as $D$ is independent of $L_0$
\begin{align*}
m_R(z)&=z+\frac{\E[L_0\textbf{1}\{L_0\le z\}|D=z]}{\PP[L_0>z]}.
\end{align*}
Therefore,
\begin{align*}
\E[T^R]&=\E[m_R(z)]=\E[D]+\int_0^{\infty}\frac{\E[L_0\textbf{1}\{L_0\le z\}]f_D(dz)}{\PP[L_0>z]}.
\end{align*}
	
In the same vein, let let $T^C(z)$ be the \textit{actual} \ch time given $D=z$. Then $T^C(z)=L_0\textbf{1}\{L_0>z\}+(\hat{T}_R(z)+L_0)\textbf{1}\{L_0\le z\}$, where 
$\hat{T}_C(z)$ is an independent copy of $T^C(z)$. Let $m_C(z)$ be the expectation of the actual \ch time given $D=z$. Then
\begin{align}
m_C(z)&=\frac{\E[L_0]}{\PP[L_0>z]}\nonumber.
\end{align}
Therefore, 
\begin{align*}
\E[T^C]=\int_{0}^{\infty}\frac{\E[L_0]f_D(dz)}{\PP[L_0>z]}. 	
\end{align*}

By construction, $T^C\geq T^R$, $\PP-$a.s.. Suppose $D$ has a $\PP-$tail heavier than $L_0$, i.e., there exist $z_0>0$ so that $\PP[L_0>z]\leq \PP[D>z]$ for all $z\geq z_0$. We show that $\E[T^R]=\infty$, which implies that $\E[T^C]=\infty$ as well. 

For any $z_0>0$, let
\begin{align*}
Y(z_0)&:=\int_{z_0}^{\infty}\frac{\E[L_0\textbf{1}\{L_0\leq z\}]f_D(dz)}{\PP[L_0>z]}.
\end{align*}
Pick $z_0$ such that $\PP[L_0>z]\leq \PP[D>z]$ for all $z\geq z_0$. Then
\begin{align*}
Y(z_0)&\geq \int_{z_0}^{\infty}\frac{\E[L_0\textbf{1}\{L_0\leq z\}]f_D(dz)}{\PP[D>z]}\\
&\geq \E[L_0\textbf{1}\{L_0\leq z_0\}]\int_{z_0}^{\infty}\frac{f_D(dz)}{\PP[D>z]} .
\end{align*}
Letting $w=\PP[D\le z]$ we have:
\begin{align*}
Y(z_0)&\geq \E[L_0\textbf{1}\{L_0\leq z_0\}]\int_{w_0}^{1}\frac{1}{1-w}dw=\infty, 
\end{align*}
where $w_0=\PP[D\le z_0]>0$. As $\E[T^R]\geq Y(z_0)$, the result follows.

Now suppose $L_0$ has a strict $\PP-$heavier tail than $D$. In this case, it suffices to show that $\E[T^C]<\infty$. Indeed, suppose there exists $z_0$ and $0<\epsilon<1$ such that $\PP[L_0>z]\geq \PP[D>z]^{\epsilon}$ for all $z_0>z$.   Then
\begin{align*}
W(z_0)&:=\int_{z_0}^{\infty}\frac{f_D(dz)}{\PP[L_0>z]}\leq  \int_{z_0}^{\infty}\frac{f_D(dz)}{\PP[D>z]^{\epsilon}}.	
\end{align*}
Letting $w=\PP[D\leq z]$ we have:
\begin{align*}
W(z_0)&\leq \int_{w_0}^{1}\frac{1}{(1-w)^{\epsilon}}dw<\infty,
\end{align*}
which implies $\E[T^C]< \infty$.

\section*{Acknowledgments}
This work was supported by a grant of the Simons Foundations (\#197982 to
the University of Texas at Austin). I would like to thank Abishek Sankararaman for his very valuable comments on this work.  

\bibliographystyle{amsplain}
\bibliography{candidacy3} 

\end{document}